\newtheorem{prop}{Proposition}
\newtheorem{theorem}{Theorem}
\newtheorem{corollary}{Corollary}
\newtheorem{lemma}{Lemma}
\newtheorem*{thm*}{Main result (informal)}
\DeclareMathOperator*{\argmin}{arg\!\min}
\DeclareMathOperator*{\tr}{\mathbf{tr}}
\newcommand{\Ncal}{\ensuremath{\mathcal{N}}}
\newcommand{\Ecal}{\ensuremath{\mathcal{E}}}
\newcommand{\Ocal}{\ensuremath{\mathcal{O}}}
\newcommand{\Sbb}{\ensuremath{\mathbb{S}}}
\newcommand{\EE}{\mathbb{E}}
\newcommand{\Sym}{\mathbf{S}}
\newcommand{\PP}{\mathbb{P}}
\newcommand{\T}{\mathsf{T}}
\newcommand{\RR}{\mathbb{R}}
\newcommand{\vecx}{\mathbf{x}}
\newcommand{\vecb}{\boldsymbol{\beta}}
\newcommand{\vecy}{\mathbf{y}}
\newcommand{\statedim}{d}
\newcommand{\outdim}{d}
\newcommand{\Ahat}{\widehat{A}}
\newcommand{\Mhat}{\widehat{M}}
\newcommand{\Astar}{A_\star}
\newcommand{\Bstar}{B_\star}
\newcommand{\Cstar}{C_\star}
\newcommand{\iid}{\stackrel{\mathclap{\text{\scriptsize{ \tiny i.i.d.}}}}{\sim}}
\newcommand{\that}{\widehat{\theta}}
\newcommand{\sshat}{\widehat{\Sigma}_\infty}
\newcommand{\sscov}{{\Sigma}_\infty}
\renewcommand{\vec}{\operatorname{vec}}
\newcommand{\svec}{\operatorname{svec}}
\newcommand{\allones}{\mathbf{1}}
\newcommand{\MLE}{\operatorname{MLE}}
\title{Time varying regression with hidden linear dynamics}
\author{Ali Jadbabaie$^\dagger$ \qquad Horia Mania \qquad Devavrat Shah
\qquad Suvrit Sra\\
LIDS, Department of Electrical Engineering and Computer Science \\
 Massachusetts Institute of Technology
}
\date{December 30, 2021}
\newcommand\blfootnote[1]{%
  \begingroup
  \renewcommand\thefootnote{}\footnote{#1}%
  \addtocounter{footnote}{-1}%
  \endgroup
}
\begin{document}
\maketitle

\begin{abstract}
We revisit a model for time-varying linear regression that assumes the unknown parameters evolve according to a linear dynamical system. 
Counterintuitively, we show that when the underlying dynamics are stable the parameters of this model can be estimated from data by combining just two ordinary least squares estimates.
We offer a finite sample guarantee on the estimation error of our method and discuss certain advantages 
it has over Expectation-Maximization (EM), which is the main approach proposed by prior work. 
\end{abstract}

\section{Introduction}

The distribution of labels given the covariates changes over time in a variety of applications of regression. Some example domains where such problems arise include economics, marketing, fashion, and supply chain optimization, where market properties evolve over time. Motivated by such problems, we revisit a model for time-varying linear regression that assumes the unknown parameters evolve according to a linear dynamical system. \blfootnote{$\dagger$ The author names appear in the alphabetical order of their last names. \\\indent \quad\; Corresponding author: Horia Mania, hmania@mit.edu.}

One way to account for distribution change in linear regression is to assume that the unknown model parameters change slowly with time \cite{bamieh2002identification,kalaba1989time,zhang2012inference}. While this assumption simplifies the problem and makes it tractable, it misses on exploiting additional structure available and it also fails to model periodicity (e.g., due to seasonality) present in some problems. As an alternative, we are interested in a dynamic model previously studied by \citet{chow1981econometric}, \citet{carraro1984identification}, and \citet{shumway1988modeling}. 
Given data $\{(x_t, y_t)\}_{t = 0}^{T - 1}$ we assume the label $y_t$ is a linear function of the features $x_t$ with unknown parameters $\beta_t$ that evolve according to linear dynamics: 
\begin{align}
\label{eq:main_problem}
\beta_{t + 1} &= A_\star \beta_t + w_t, \\
y_t &= x_t^\top \beta_t + \epsilon_t,\nonumber
\end{align}
where $w_t$ is \emph{process noise} and $\epsilon_t$ is \emph{observation noise}. The hidden states $\beta_t$ are not observed and the parameter matrix $\Astar$ is unknown. In some applications, such as modeling economic trends, one may be interested in recovering the unknown parameters $\Astar$, which would offer insights into long term trends. When $\Astar$ and the distributions of $w_t$ and $\epsilon_t$ are known prediction is also possible via the Kalman filter. Therefore, our main goal is to estimate $\Astar$ from data $\{(x_t, y_t)\}_{t = 0}^{T - 1}$. 

Estimating $\Astar$ in \eqref{eq:main_problem} is a system identification problem of an unactuated linear system that has a time-varying observation map. Up until now there has been no simple estimator for this problem that has a finite sample guarantee. Counterintuitively, we show that when $\Astar$ is stable it can be estimated from data by combining just two ordinary least squares estimates, one that regresses $y_t^2$ on $x_t x_t^\top$ and one that regresses $y_t y_{t + 1}$ on $x_{t + 1} x_{t }^\top$. When the hidden dynamics are stable these two regressions 
estimate the covariances $\sscov := \EE \beta_t \beta_t^\top$ and $\Astar \sscov = \EE \beta_{t + 1}\beta_t^\top$, from which we can obtain an estimate of $\Astar$. For this reason we call this approach the covariance method (CM).  

In addition to its simplicity, CM also admits a finite sample guarantee on the estimation error. In contrast, the statistical performance of Expectation-Maximization (EM), which is a classical estimation method used by prior work \cite{shumway1982approach}, is not well understood theoretically. We prove the following guarantee for CM. 

\begin{thm*}
Let $\Sigma_w$ be the covariance of $w_t$, $\sigma_\epsilon$ be the standard deviation of $\epsilon_t$, and $\statedim$ be dimension of $x_t$. Suppose $\Astar$ has spectral radius $\rho = \rho(\Astar)$ smaller than one. Then, given a trajectory of length $T$, CM produces with probability at least $1 - \delta$  an estimate $\Ahat$ such that
\begin{align*}
\|\Ahat - \Astar \|_F \leq \frac{1 + \|\Astar\|}{\lambda_{\min}(\sscov)} \sqrt{\frac{\statedim^4}{T\delta}\left(\sigma_\epsilon^4 + \frac{\|\sscov\|^2 }{1 - \rho^2}\right)}.
\end{align*}
\end{thm*}

This results shows that our method estimates $\Astar$ at a $\Ocal(T^{-1/2})$ rate without the need of a good initialization as in the case of EM. Some dependence on $\lambda_{\min}(\sscov)$ is to be expected because when $\sscov$ is rank deficient the states $\beta_t$ do not span all of $\RR^d$, in which case no method would be able to identify $\Astar$ fully. Unsurprisingly, the upper bound on the estimation error blows up as $\rho$ approaches one. The main idea of CM is to estimate $\sscov$, which itself blows up as $\rho$ approaches one. 

We discuss CM in detail and the formal statement of our main result in Section~\ref{sec:cov_est}.
In Section~\ref{sec:meta} we present a meta result that high lights the main idea of our analysis.
Then, in order to ground the comparison between our method and EM in Section~\ref{sec:background} we discuss background on stable linear systems and their identification in the fully observed case, followed in Section~\ref{sec:em} by a discussion of EM. We conclude with a discussion of some open questions. 

\subsection{Notation.} 
Before moving forward we introduce some useful notation. 
We use $y_{s:t}$ to denote the sequence of observations $\{y_s, y_{s + 1}, \ldots, y_{t - 1}\}$, with a similar notation for $x$ and $\beta$. The bold symbols $\vecx$, $\vecy$, and $\vecb$ are used to denote the entire trajectories $x_{0:T}$, $y_{0:T}$, and $\beta_{0:T}$. The norm $\|\cdot \|$ denotes the $\ell_2$ norm for vectors and the operator norm for matrices. We use $\|\cdot \|_F$ to denote the Frobenius norm.
We use $c$ to denote universal constants that may change from line to line.

\subsection{What makes problem (1) challenging?}
\label{sec:challenge}

Classical methods for linear system identification, such as the eigensystem realization algorithm \cite{juang1985eigensystem} and methods based on ordinary least squares \cite{sarkar2021finite,simchowitz2019learning,tsiamis2019finite}, were designed for the identification of time invariant systems of the form:
\begin{align} 
\beta_{t + 1} &= \Astar \beta_t + \Bstar u_t + w_t\\
y_t &= \Cstar \beta_t + \epsilon_t.
\label{eq:classical_problem}
\end{align}
Two factors make the estimation of such systems easier: the time invariance and the actuation of the system through $u_t$. We discuss why either of these two factors make the estimation easier. 

\paragraph{Time-invariant and actuated systems.}
When the dynamics are actuated and time invariant one can regress $y_t$ on a history of inputs $(u_{t - 1}, u_{t - 2}, \ldots)$ in order to estimate the Markov parameters $\Cstar \Astar^j \Bstar$ with $j\in \{0, 1, \ldots, r\}$ for some $r$. Then, one can apply the Ho-Kalman algorithm to extract estimates of $\Astar$, $\Bstar$, and $\Cstar$ from the estimated Markov parameters. \citet{sarkar2021finite} and \citet{simchowitz2019learning} use this high-level approach to offer guarantees on the estimation of systems of the form \eqref{eq:classical_problem}. The success of such an estimation strategy is guaranteed by \citet{oymak2021revisiting}, who bounded the error of the estimates produced by the Ho-Kalman algorithm as a function of the estimation error of the Markov parameters.

\paragraph{Time-varying and actuated systems.}

In our setting the ``$\Cstar$'' matrix (observer matrix) is time-varying which implies time-varying Markov parameters. Since the time-varying observer matrices are assumed known in our setting, if we were able to actuate the dynamical system, we could estimate the Markov parameters $\Astar^j \Bstar$ and then use the Ho-Kalman algorithm to recover the $\Astar$ and $\Bstar$ matrices. However, \eqref{eq:main_problem} is unactuated. 

Moreover, \citet{majji2010time} showed that if one can collect multiple trajectories from the time-varying dynamics by restarting the system, then one can recover all $A_t$, $B_t$, $C_t$. If only one trajectory were available and $A_t$, $B_t$, and $C_t$ were time-varying and unknown, then identification would impossible because we would have only one data point for each time step. 

\paragraph{Time-invariant and unactuated systems.}

This setting corresponds to having $\Bstar = 0$ in \eqref{eq:classical_problem} and can be handled by regressing $y_{t + 1}$ on a history of past observations $y_t, y_{t - 1}, \ldots y_{t - h}$ for some $h$. This approach is successful because one can show that there exists a matrix $M$ such that 
\begin{align*}
y_{t + 1} = M \begin{bmatrix}
y_t \\
y_{t - 1}\\
\vdots\\
y_{t - h}
\end{bmatrix} + \zeta_t,
\end{align*}
for some small $\zeta_t$ \cite{tsiamis2019finite}. However, in the time-varying setting there is no time-invariant matrix $M$ that satisfies this identity. Even if feasible, such an approach would not use the fact that the observer matrices $x_t^\top$ are known. 

\paragraph{Time-varying and unactuated systems.}

Our problem is both time-varying and unactuated and the methods previously discussed cannot be extended easily to address it. As discussed previously, in the time-invariant and unactuated case a simple estimator can solve the problem when both $\Astar$ and $\Cstar$ are unknown. While our problem is time-varying, the observer matrices are given. Hence, only $\Astar$ must be estimated. Our problem has fewer unknown parameters than in the time-invariant case and yet the methods discussed thus far are not applicable.










\section{The covariance method (CM)}
\label{sec:cov_est}

In this section we propose and analyze a method for estimating $\Astar$ that first estimates the covariance of the hidden states $\beta_t$ sampled from the stationary distribution. To estimate the covariance matrix our method applies ordinary least squares to regress $y_t^2$ on $x_t x_t^\top$. Interestingly, the need to estimate the covariance of random vectors that can only be observed through linear measurements arises in the cryo-EM heterogeneity problem \cite{katsevich2015covariance}. We employ the same solution to this covariance estimation problem and offer a finite sample guarantee for this approach.

\paragraph{Assumptions.}In order to estimate the covariance of the stationary distribution we need the stationary distribution to exist and have a finite covariance. The dynamics $\beta_{t + 1} = \Astar \beta_t + w_t$ have a stationary distribution with finite covariance if and only if the matrix $\Astar$ is stable (i.e., $\rho(\Astar) < 1$). We also assume the covariates $x_t$ are sampled i.i.d. according to some distribution. For simplicity we assume $x_t \iid \Ncal(0, I_\statedim)$, but our analysis can be generalized to features with a sub-Gaussian distribution and a more general covariance structure. We also assume $w_t \iid \Ncal(0, \Sigma_w)$  and $\epsilon_t \iid \Ncal(0, \sigma_\epsilon^2)$, but our analysis can be extended to any independent noise processes with bounded fourth moments. In Appendix~\ref{app:features} we present a more general and sufficient set of assumptions for our analysis. 
 
When $\Astar$ is stable, the linear system mixes exponentially fast to a distribution with covariance $\sscov$ (the distribution is Gaussian when $w_t$ are Gaussian), where $\Sigma_\infty$ is the unique positive semidefinite solution $P$ of the Lyapunov equation $P = A P A^\top + \Sigma_w$. Given that the convergence to stationarity is fast we also assume that the initial state $\beta_0$ is sampled from the stationary distribution $\Ncal(0, \Sigma_\infty)$. Hence, the marginal distribution of all $\beta_t$ is $\Ncal(0, \Sigma_\infty)$. 


\paragraph{The method.}Given our assumptions, we observe that there exist random variables $\zeta_t$ and $\xi_t$ whose means are zero conditional on $x_t$ and $x_{t + 1}$ and that satisfy the identities:
\begin{align}
y_t^2 &= x^\top_t \Sigma_\infty x_t + \sigma_\epsilon^2 + \zeta_t  \quad \text{and} \quad
y_t y_{t + 1} = x^\top_{t+ 1} A \Sigma_\infty x_{t} + \xi_t.
\label{eq:cov_problem}
\end{align} 
We write out explicitly $\zeta_t$ and $\xi_t$ in Appendix~\ref{sec:proof_cov_thm}. Since $\Sigma_\infty$ and $A \Sigma_\infty$ enter \eqref{eq:cov_problem} linearly,  we estimate them with ordinary least squares. Concretely, our method performs the following steps\footnote{For the analysis we assume $\sigma_\epsilon^2$ is given, but in practice one would run OLS with an intercept also to account for $\sigma_\epsilon^2$.}:
\begin{align}
\label{eq:cov_method}
\sshat &\in \argmin_{M} \sum_{t = 0}^{T - 1} (y_t^2 - x_t^\top M x_t - \sigma_\epsilon^2)^2,\\\nonumber
\widehat{M} &\in \argmin_M \sum_{t = 0}^{T - 1} (y_t y_{t + 1} - x_t^\top M x_{t + 1})^2,\\
\Ahat &= \widehat{M}\sshat^{-1}.\nonumber
\end{align}

To offer a guarantee on the estimation error $\Ahat - \Astar$ we first analyze the error in the estimation of $\Sigma_\infty$ and $A \Sigma_\infty$. It is known that ordinary least squares is a consistent estimator for the recovery of covariance matrices from squared linear measurements such as \eqref{eq:cov_problem} when the measurements are i.i.d. \cite{katsevich2015covariance}. However, in our case the measurements $y_t$ are not independent. Moreover, we wish to quantify the estimation rate. 
Analyzing the performance of OLS applied to \eqref{eq:cov_problem} is challenging because $\zeta_t$ and $\xi_t$ are heavy tailed, they are dependent on the covariates $x_t$, and they are dependent across time due to the dynamics. By using Chebyshev's inequality we circumvent these issues, yielding a medium probability guarantee.

Before we state our main result we review a consequence of Gelfand's formula for stable matrices. 
For any stable matrix $\Astar$ and any $\gamma \in (\rho(\Astar), 1)$ there exists a finite $\tau$ such that $\|\Astar^k\| \leq \tau \gamma^k$ for all $k \geq 0$. We denote $\tau(\Astar, \gamma) := \sup\{\|\Astar^k\|\gamma^{-k}: k \geq 0\}$ \cite[see][for more details]{tu2017non}.

\begin{theorem}
\label{thm:cov1}
Suppose $x_t \iid \Ncal(0, I_\statedim)$, $w_t \iid \Ncal(0, \Sigma_w)$, and $\epsilon_t \iid \Ncal(0, \sigma_\epsilon^2)$. Moreover, assume $\rho(\Astar) < 1$ and let $\gamma \in (\rho(\Astar), 1)$. Then, with probability $1 - \delta$ the covariance method produces estimates $\sshat$ and $\widehat{M}$ that satisfy
\begin{subequations}
\begin{align}
\|\sshat - \sscov\|_F &\leq  \sqrt{\frac{c\statedim^2}{T\delta}\left[\sigma_\epsilon^4 + \frac{\tau(\Astar, \gamma)^2}{1 - \gamma^2} \|\sscov\|^2 \! \left(\statedim^2 \! + \log\left(\frac{2T}{\delta}\right)^2\right)\right]},
\label{eq:thm_part_I}\\
\|\widehat{M} -  A\sscov \|_F &\leq \sqrt{\frac{c\statedim^2}{T\delta}\left[\sigma_\epsilon^4 + \frac{\tau(\Astar, \gamma)^3}{1 - \gamma^2} \|\sscov\|^2 \! \left(\statedim^2 \! + \log\left(\frac{2T}{\delta}\right)^2\right)\right]},
\label{eq:thm_part_II} 
\end{align}
\end{subequations}
whenever $T \geq c  \statedim^4 \log(c \frac{\outdim^2}{\delta})$. The term $c$ a universal constant that changes from line to line (the only difference between the two upper bounds is the exponent of $\tau$). 
\end{theorem}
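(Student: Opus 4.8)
The plan is to treat each estimator in \eqref{eq:cov_method} as an ordinary least squares problem in a single vector of unknowns, bound the error by the product of (i) the inverse smallest eigenvalue of the empirical design matrix and (ii) the norm of a mean-zero noise sum, and control the two factors separately. Vectorizing, write $\phi_t = \svec(x_t x_t^\top)$ and $\theta_\star = \svec(\sscov)$, so the first identity in \eqref{eq:cov_problem} becomes the linear model $y_t^2 - \sigma_\epsilon^2 = \ip{\phi_t}{\theta_\star} + \zeta_t$, and the normal equations give
\begin{align*}
\svec(\sshat) - \theta_\star = G^{-1}\!\left(\frac{1}{T}\sum_{t=0}^{T-1}\phi_t\zeta_t\right), \qquad G := \frac{1}{T}\sum_{t=0}^{T-1}\phi_t\phi_t^\top.
\end{align*}
Since $\svec$ is an isometry in Frobenius norm, this yields $\|\sshat-\sscov\|_F \le \lambda_{\min}(G)^{-1}\,\bignorm{\frac{1}{T}\sum_t \phi_t\zeta_t}$, reducing \eqref{eq:thm_part_I} to two bounds. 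The identical reduction handles \eqref{eq:thm_part_II} after replacing $\phi_t$ by $\vec(x_t x_{t+1}^\top)$ and $\zeta_t$ by $\xi_t$.

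Next I would lower bound $\lambda_{\min}(G)$. Because the $\phi_t$ are i.i.d. functions of the Gaussian covariates, $\EE G$ is the fixed fourth-moment matrix of an isotropic Gaussian, whose smallest eigenvalue is an absolute constant. Standard matrix concentration (a truncated matrix Bernstein inequality applied to the heavy-tailed but finite-moment summands $\phi_t\phi_t^\top$) then shows $\lambda_{\min}(G)\ge c$ on an event of probability $1-\delta/2$ provided $T\gtrsim \statedim^4\log(\statedim^2/\delta)$; this is exactly where the sample-size hypothesis enters. As the $x_t$ are mutually independent, this step is insensitive to the dynamics and is otherwise routine.

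The main obstacle is the noise term. The variables $\zeta_t,\xi_t$ are quartic in jointly Gaussian quantities, hence heavy tailed; they are correlated with $\phi_t$; and crucially they are correlated across time through $\beta_t$. Rather than a tail bound, I would control only the second moment and invoke Chebyshev, which is what produces the $1/\delta$ (rather than $\log(1/\delta)$) dependence. Expanding,
\begin{align*}
\EE\bignorm{\frac{1}{T}\sum_t \phi_t\zeta_t}^2 = \frac{1}{T^2}\sum_{s,t}\EE\!\left[\zeta_s\zeta_t\,\ip{\phi_s}{\phi_t}\right],
\end{align*}
and I would evaluate each summand by Isserlis' theorem for Gaussian moments. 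The diagonal terms $s=t$ contribute the $\sigma_\epsilon^4$ piece (from the i.i.d. part $\epsilon_t^2-\sigma_\epsilon^2$, which sees no dynamics) and the $\statedim^2\|\sscov\|^2$ piece (from the state-dependent part). For $s\ne t$ the only temporal coupling is through $\EE[\beta_s\beta_t^\top]=\Astar^{|s-t|}\sscov$ up to transpose, so every off-diagonal summand carries a factor $\|\Astar^{|s-t|}\|\le \tau(\Astar,\gamma)\gamma^{|s-t|}$; summing the resulting geometric series produces the $\tau(\Astar,\gamma)^2/(1-\gamma^2)$ factor, and the extra power of $\tau$ in \eqref{eq:thm_part_II} comes from the additional $\Astar$ relating $\beta_{t+1}$ to $\beta_t$ in the cross product $y_ty_{t+1}$.

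Finally, I expect the $\log(2T/\delta)^2$ factor to arise by first conditioning on the high-probability event $\max_t\|x_t\|^2\lesssim \statedim+\log(T/\delta)$, which caps the heaviest tail of the quartic noise before the moment computation is carried out. Combining the Chebyshev bound on the noise sum (probability $1-\delta/2$) with the eigenvalue bound through a union bound then gives the two stated inequalities, with the $\statedim^2$ prefactor coming from $\EE\|\phi_t\|^2\asymp\statedim^2$.
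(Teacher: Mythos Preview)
Your high-level strategy matches the paper's: reduce to OLS in vectorized form, lower-bound the design matrix, control the noise sum in second moment and invoke Chebyshev, and use the geometric decay $\|\Astar^{|s-t|}\|\le\tau\gamma^{|s-t|}$ to sum the cross terms. The differences are in execution. First, the paper lower-bounds $\lambda_{\min}\bigl(\sum_t\phi_t\phi_t^\top\bigr)$ by a covering argument on the sphere combined with one-sided Bernstein for nonnegative scalars, not by truncated matrix Bernstein; both routes give the same $T\gtrsim d^4\log(d^2/\delta)$ burn-in. Second, and more importantly, the paper does \emph{not} compute the unconditional moment $\EE[\zeta_s\zeta_t\langle\phi_s,\phi_t\rangle]$. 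Instead it conditions on $x_{0:T}$ throughout (this is Lemma~\ref{lem:meta}): once $x_{0:T}$ is fixed, the design matrix is deterministic and one only needs the \emph{conditional} correlations $\EE[\zeta_s\zeta_t\mid x_{0:T}]$, which are computed exactly as $2\bigl(x_s^\top\sscov(\Astar^{|s-t|})^\top x_t\bigr)^2$. These are then bounded on the event $\sup_t\|x_t\|\le\sqrt d+\sqrt{2\log(2T/\delta)}$, and \emph{that} is where the $\log(2T/\delta)^2$ factor enters. The conditional Chebyshev $\PP\bigl(e^\top\Sigma_{e\mid x}^{-1}e>z^2\bigr)\le k/z^2$ with $k\asymp d^2$ then gives the outer $d^2/\delta$.

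Your last paragraph is therefore the one place where the reasoning is muddled: you cannot ``condition on the event $\max_t\|x_t\|^2\lesssim d+\log(T/\delta)$'' and then apply Isserlis, since the conditioned variables are no longer Gaussian. Either (i) follow the paper and condition on the full $x_{0:T}$, compute the conditional noise correlations as functions of the $x_t$, and bound those on the high-probability event; or (ii) stick with your fully unconditional second-moment computation, in which case the $\log$ factor never appears and you prove a bound that is at least as strong as the stated one. Both routes work, but you should commit to one rather than mixing them.
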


The proof of this result is based on the meta-analysis for linear regression with heavy tailed and dependent measurements presented in Section~\ref{sec:meta}. Theorem~\ref{thm:cov1} shows that as the number $T$ of data points collected grows the estimation error decays at a rate $\Ocal(T^{-1/2})$. Theorem~\ref{thm:cov1} also suggests that the closer to instability the matrix $\Astar$ is the more data our method requires to recover $\sscov$ and $\Astar \sscov$. This relationship is unsurprising since $\sscov$ and the variances of $\zeta_t$ and $\xi_t$ in \eqref{eq:cov_problem} approach infinity as $\rho(\Astar)$ approaches one. Nonetheless, in the case of time-invariant linear dynamical systems with partial observations it is possible to estimate $\Astar$ even when it is marginally unstable (i.e. $\rho(\Astar) = 1$) \cite{simchowitz2019learning,tsiamis2019finite}. 

Despite its merits, Theorem~\ref{thm:cov1} is likely not capturing the right dependence of problem \eqref{eq:main_problem} on the state dimension $d$, the probability of failure $\delta$, and the variances  $\sigma_\epsilon^2$ and $\sscov$. For example, we are trying to estimate $\statedim^2$ unknown parameters so we expect to need a number of scalar measurements that scales as $\statedim^2$, not as $\statedim^4$. We obtain a $1/\delta$ dependence on $\delta$ instead of $\log(1/\delta)$ because the proof relies on Chebyshev's inequality. It is not clear whether the sub-optimal dependencies on problem parameters represent an artifact of our analysis or a limitation of our method. 
  
Our sample complexity guarantee on the estimation of $A$ is a simple corollary of Theorem~\ref{thm:cov1}. 

\begin{corollary}
\label{cor:cov}
Under the same assumptions as in Theorem~\ref{thm:cov1}, as long as $T$ satisfies
\begin{align*}
T \geq \frac{c\statedim^2}{\lambda_{\min}(\sscov)^2\delta}\left[\sigma_\epsilon^4 + \frac{\tau(\Astar, \gamma)^3}{1 - \gamma^2} \|\sscov\|^2 \! \left(\statedim^2 \! + 4\log\left(\frac{2T}{\delta}\right)^2\right)\right],
\end{align*} 
the covariance method outputs with probability at least $1 - \delta$ an estimate $\Ahat$ that satisfies:
\begin{align}
\|\Ahat - \Astar \|_F \leq c\frac{1 + \|\Astar\|}{\lambda_{\min}(\sscov)} \sqrt{\frac{c\statedim^2}{T\delta}\left[\sigma_\epsilon^4 + \frac{\tau(\Astar, \gamma)^3}{1 - \gamma^2} \|\sscov\|^2 \! \left(\statedim^2 \! + 4\log\left(\frac{2T}{\delta}\right)^2\right)\right]}. 
\end{align}
\end{corollary}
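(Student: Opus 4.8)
The plan is to reduce the error $\Ahat - \Astar$ to the two errors $\sshat - \sscov$ and $\Mhat - \Astar\sscov$ that Theorem~\ref{thm:cov1} already controls, via a standard perturbation argument for the matrix inverse. Since $\Astar = (\Astar\sscov)\,\sscov^{-1}$ and $\Ahat = \Mhat\,\sshat^{-1}$, I would first write the telescoping decomposition
\begin{align*}
\Ahat - \Astar = (\Mhat - \Astar\sscov)\,\sshat^{-1} + \Astar\sscov\,(\sshat^{-1} - \sscov^{-1}).
\end{align*}
Applying the resolvent identity $\sshat^{-1} - \sscov^{-1} = -\sscov^{-1}(\sshat - \sscov)\sshat^{-1}$ to the second term and using $\Astar\sscov\,\sscov^{-1} = \Astar$ gives
\begin{align*}
\Ahat - \Astar = (\Mhat - \Astar\sscov)\,\sshat^{-1} - \Astar\,(\sshat - \sscov)\,\sshat^{-1}.
\end{align*}
Taking Frobenius norms, using $\|XY\|_F \le \|X\|_F\|Y\|$ and $\|XYZ\|_F \le \|X\|\,\|Y\|_F\,\|Z\|$ together with the triangle inequality, I obtain
\begin{align*}
\|\Ahat - \Astar\|_F \le \big(\|\Mhat - \Astar\sscov\|_F + \|\Astar\|\,\|\sshat - \sscov\|_F\big)\,\|\sshat^{-1}\|.
\end{align*}

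Next I need a lower bound on $\lambda_{\min}(\sshat)$ in order to control $\|\sshat^{-1}\| = \lambda_{\min}(\sshat)^{-1}$. By Weyl's inequality, $\lambda_{\min}(\sshat) \ge \lambda_{\min}(\sscov) - \|\sshat - \sscov\| \ge \lambda_{\min}(\sscov) - \|\sshat - \sscov\|_F$. This is exactly where the sample-size hypothesis of the corollary enters: the stated lower bound on $T$ is chosen so that the right-hand side of \eqref{eq:thm_part_I} is at most $\tfrac12\lambda_{\min}(\sscov)$, whence $\|\sshat - \sscov\|_F \le \tfrac12\lambda_{\min}(\sscov)$ on the high-probability event. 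This yields $\lambda_{\min}(\sshat) \ge \tfrac12\lambda_{\min}(\sscov)$, so in particular $\sshat$ is invertible and $\|\sshat^{-1}\| \le 2/\lambda_{\min}(\sscov)$.

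Finally I would bound both $\|\Mhat - \Astar\sscov\|_F$ and $\|\sshat - \sscov\|_F$ by the right-hand side of \eqref{eq:thm_part_II}; this is legitimate because the bound \eqref{eq:thm_part_II} (with the $\tau^3$ factor) dominates \eqref{eq:thm_part_I} since $\tau(\Astar,\gamma) \ge 1$, and both hold simultaneously on the single probability-$(1-\delta)$ event asserted by Theorem~\ref{thm:cov1}, so no additional union bound is needed. Substituting into the displayed inequality gives
\begin{align*}
\|\Ahat - \Astar\|_F \le \frac{2(1 + \|\Astar\|)}{\lambda_{\min}(\sscov)}\sqrt{\frac{c\statedim^2}{T\delta}\!\left[\sigma_\epsilon^4 + \frac{\tau(\Astar,\gamma)^3}{1-\gamma^2}\|\sscov\|^2\!\left(\statedim^2 + 4\log\big(\tfrac{2T}{\delta}\big)^2\right)\right]},
\end{align*}
which is the claimed bound after folding the factor $2$ into the universal constant.

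I expect the only genuinely delicate point to be the event bookkeeping around the invertibility of $\sshat$: I must make sure that the bound $\|\sshat^{-1}\| \le 2/\lambda_{\min}(\sscov)$ holds on the \emph{same} event as the two Theorem~\ref{thm:cov1} estimates, and verify that the constant hidden in the hypothesis on $T$ is calibrated so that \eqref{eq:thm_part_I} indeed drops below $\tfrac12\lambda_{\min}(\sscov)$ (using $\tau \ge 1$ and that the hypothesis bracket is at least as large as the one in \eqref{eq:thm_part_I}). Everything downstream is routine matrix perturbation algebra.
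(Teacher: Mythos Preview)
Your proposal is correct and follows essentially the same approach as the paper: both derive the identity $(\Ahat - \Astar)\sshat = (\Mhat - \Astar\sscov) - \Astar(\sshat - \sscov)$ (you reach it via telescoping plus the resolvent identity, the paper by right-multiplying $\Ahat - \Astar$ by $\sshat$ and expanding), then bound $\|\sshat^{-1}\|$ by requiring $T$ large enough that $\|\sshat - \sscov\|_F \le \tfrac12\lambda_{\min}(\sscov)$. Your write-up is in fact more explicit than the paper's about the event bookkeeping and about why the $\tau^3$ bound dominates the $\tau^2$ bound.
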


Therefore, we have shown that it is possible to estimate $\Astar$ at a rate $\Ocal(T^{-1/2})$ without the need of a good initialization as in the case of EM. This corollary inherits all the limitations of Theorem~\ref{thm:cov1}. The dependence on the inverse of $\lambda_{\min}(\sscov)$ may seem sub-optimal, but it does capture an important aspect of the problem. When $\sscov$ is smaller in a PSD sense it means that the states $\beta_t$ have smaller norms, which implies that the signal-to-noise ration in the observations $y_t = x_t^\top \beta_t + \epsilon_t$ is lower. In particular, if $\lambda_{\min}(\sscov) = 0$,  the hidden states $\beta_t$  lie in a subspace of $\RR^\statedim$ of dimension smaller than $\statedim$, in which case $\Astar$ cannot be fully recovered.

\section{A meta-analysis of linear regression with heavy-tailed and dependent noise}
\label{sec:meta}

In this section we offer a meta-analysis for linear regression that encompasses both the estimation of $\sscov$ and $\Astar \sscov$. Suppose we wish to estimate $\theta$ from data $\{(\phi_t, z_t)\}_{t  = 1}^T$, with $z_t = \phi_t^\top \theta + \zeta_t$, where $\phi_t$ are given features and $\zeta_t$ is noise. However, we do not assume $\zeta_t$ and $\phi_t$ are independent nor do we assume the noise terms $\zeta_t$ are independent across time. Moreover, we allow $\zeta_t$ to be heavy tailed. In this case, standard analyses of linear regression do not apply. To surmount these challenges and prove the following lemma we rely on Chebyshev's inequality. 

\begin{lemma}
\label{lem:meta}
Suppose $\phi_t, \theta \in \RR^k$ and that $\sum_{t = 1}^T \phi_t \phi_t^\top$ is invertible. Also, suppose $\EE[\zeta_t | \phi_{1:T}] = 0$, $\EE[\zeta_t^2 | \phi_{1:T}] \leq f_1(\phi_{1: T})$ and $\EE [\zeta_t \zeta_{t + h}| \phi_{1:t}]] \leq \gamma^h f_2(\phi_{1:T})$ for some $\gamma \in (0,1)$ and functions $f_1$, $f_2$. Then, with probability $1- \delta$ the OLS estimate $\that$ satisfies
\begin{align*}
\|\that - \theta\| \leq \sqrt{\frac{k}{\delta} \left(f_1(\phi_{1:T}) + \frac{f_2(\phi_{1:T})}{1 - \gamma}\right)\left(\sum_{t = 1}^T\phi_t \phi_t^\top \right)^{-1}}.
\end{align*}
\end{lemma}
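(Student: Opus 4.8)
The plan is to exploit the closed form of the ordinary least squares estimator and reduce the whole statement to a single conditional second-moment computation, after which one application of the second-moment (Chebyshev/Markov) bound delivers the $1/\delta$ probability guarantee. Write $G = \sum_{t=1}^T \phi_t \phi_t^\top$, which is invertible by hypothesis. Since $z_t = \phi_t^\top \theta + \zeta_t$, the estimator satisfies $\that = G^{-1}\sum_t \phi_t z_t = \theta + G^{-1} e$ with $e := \sum_{t=1}^T \phi_t \zeta_t$, so the error is the purely stochastic term $\that - \theta = G^{-1}e$. Everything downstream conditions on the entire feature sequence $\phi_{1:T}$, so that $G$ (hence $G^{-1}$ and $G^{-2}$) is treated as deterministic and all randomness lives in the $\zeta_t$.

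First I would control the conditional second moment. Using $\|\that - \theta\|^2 = e^\top G^{-2} e$ and linearity,
\[
\EE\big[\|\that - \theta\|^2 \,\big|\, \phi_{1:T}\big] = \tr\!\big(G^{-2}\, \EE[ee^\top \mid \phi_{1:T}]\big) = \sum_{s,t} \EE[\zeta_s \zeta_t \mid \phi_{1:T}]\; \phi_t^\top G^{-2} \phi_s .
\]
The assumption $\EE[\zeta_t \mid \phi_{1:T}] = 0$ guarantees no bias term survives, so this conditional second moment is exactly the quantity to bound. I would split the double sum into its diagonal and off-diagonal parts and dispatch them with the two given moment bounds. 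The diagonal is immediate: using $\EE[\zeta_t^2 \mid \phi_{1:T}] \le f_1$ and $\phi_t^\top G^{-2}\phi_t \ge 0$,
\[
\sum_{t} \EE[\zeta_t^2 \mid \phi_{1:T}]\, \phi_t^\top G^{-2}\phi_t \le f_1 \sum_t \phi_t^\top G^{-2}\phi_t = f_1\, \tr\!\big(G^{-2}G\big) = f_1\, \tr(G^{-1}).
\]

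For the off-diagonal part I would invoke the decay hypothesis $|\EE[\zeta_s \zeta_t \mid \phi_{1:T}]| \le \gamma^{|s-t|} f_2$ together with Cauchy--Schwarz against the positive semidefinite matrix $G^{-2}$, namely $|\phi_t^\top G^{-2}\phi_s| \le \sqrt{(\phi_s^\top G^{-2}\phi_s)(\phi_t^\top G^{-2}\phi_t)}$, followed by an AM--GM symmetrization. Setting $b_t := \phi_t^\top G^{-2}\phi_t \ge 0$, the off-diagonal contribution is at most $f_2 \sum_{s \neq t} \gamma^{|s-t|} b_s$, and summing the geometric series in $h = |s-t|$ collapses this to a multiple of $\sum_s b_s = \tr(G^{-1})$, producing a factor of order $f_2/(1-\gamma)$. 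Combining the two pieces yields
\[
\EE\big[\|\that - \theta\|^2 \,\big|\, \phi_{1:T}\big] \le \Big(f_1 + \tfrac{f_2}{1-\gamma}\Big) \tr(G^{-1}) \le \Big(f_1 + \tfrac{f_2}{1-\gamma}\Big)\, k\, \|G^{-1}\|,
\]
where the last step uses $\tr(G^{-1}) \le k\,\|G^{-1}\|$ because $G^{-1}$ is $k \times k$; this is where the dimension factor $k$ and the reading of $\big(\sum_t \phi_t \phi_t^\top\big)^{-1}$ as its operator norm enter.

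Finally I would apply Markov's inequality to the nonnegative variable $\|\that - \theta\|^2$ conditionally on $\phi_{1:T}$: with conditional probability at least $1-\delta$ it does not exceed $1/\delta$ times its conditional mean, and since this holds for every realization of the features it holds unconditionally after integrating out $\phi_{1:T}$; taking square roots gives the claim. The main obstacle is precisely the off-diagonal term: the noise $\zeta_t$ is heavy-tailed and both mutually dependent and dependent on the covariates, so no independence or light-tail argument is available. The whole proof hinges on converting the mixing-type decay assumption into a convergent geometric series, and on the Cauchy--Schwarz reduction that routes every contribution through the single scalar $\tr(G^{-1})$. Because only second moments are assumed, Markov's inequality is essentially forced, which is exactly why the guarantee carries a $1/\delta$ dependence rather than $\log(1/\delta)$.
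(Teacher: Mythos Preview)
Your proof is correct and follows the same strategy as the paper: write the OLS error in closed form, bound the conditional second moment by splitting into diagonal and off-diagonal contributions and summing the geometric decay, then apply Markov/Chebyshev. The only cosmetic difference is that the paper bounds the full conditional covariance matrix $\Sigma_{e|x} \preceq \big(f_1 + f_2/(1-\gamma)\big)G^{-1}$ via the PSD inequality $uv^\top + vu^\top \preceq uu^\top + vv^\top$ and then invokes Chebyshev on the Mahalanobis form $e^\top \Sigma_{e|x}^{-1} e$, whereas you bound the scalar $\EE[\|\that-\theta\|^2\mid\phi_{1:T}]$ directly---but your Cauchy--Schwarz plus AM--GM step is precisely that matrix inequality after sandwiching by $G^{-2}$ and taking traces.
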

\begin{proof}
The OLS estimate $\that$ is given by $\left(\sum_{t = 0}^{T - 1}\phi_t \phi_t^\top \right)^{-1}\left(\sum_{t = 0}^{T - 1} \phi_t z_t\right)$. Since $z_t = \phi_t^\top \theta + \zeta_t$, the estimation error can be written as 
\begin{align*}
e:= \that - \theta = \left(\sum_{t = 0}^{T - 1}\phi_t \phi_t^\top \right)^{-1}\left(\sum_{t = 0}^{T - 1} \phi_t \zeta_t\right).
\end{align*}
It is convenient to use Chebyshev's inequality to upper bound $\|e\|$ because it only requires an upper bound on the conditional variance of the noise, conditioned on the covariates $\phi_t$. Let  $\Sigma_{e|x} := \EE\left[ e e^\top| \phi_{1:T}\right]$. Then, Chebyshev's inequality applied to $e$ yields  
\begin{align}
\label{eq:cheb_meta}
\PP\left(\sqrt{e^\top \Sigma_{e|x}^{-1}e} > z\right) \leq \frac{k}{z^2}.
\end{align}

Therefore, we wish to upper bound $\Sigma_{e|x}$ with high probability. Note that
\begin{align}
\Sigma_{e|x} &= \left(\sum_{t = 0}^{T - 1} \phi_t \phi_t^\top \right)^{-1} \left(\sum_{i,j = 0}^{T - 1} \EE\left[\zeta_i \zeta_j|\phi_{1:T}\right] \phi_i \phi_j^\top \right) \left(\sum_{t = 0}^{T - 1} \phi_t \phi_t^\top \right)^{-1}
\label{eq:error_cov_id}
\end{align}
To upper bound $\EE\left[\zeta_i \zeta_j|\phi_{1:T}\right] \phi_i \phi_j^\top $ note that for any vectors $u$ and $v$ we have $u v^\top + v u^\top \preceq uu^\top + vv^\top$. Then, using our assumptions on $\EE\left[\zeta_i \zeta_j|\phi_{1:T}\right]$ yields 
 \begin{align}
 \label{eq:cov_upper_meta}
\Sigma_{e|x} &\preceq \left(f_1(\phi_{1:T}) + \frac{f_2(\phi_{1:T})}{1 - \gamma}\right)\left(\sum_{t = 1}^T\phi_t \phi_t^\top \right)^{-1}.
\end{align}
The conclusion follows by putting together \eqref{eq:cheb_meta} and \eqref{eq:cov_upper_meta}. 
\end{proof}

In the estimation of $\sscov$ and $\Astar \sscov$ the features $\phi_t$ are i.i.d. sub-exponential, allowing us to prove $\lambda_{\min}\left(\sum_{t = 1}^T\phi_t \phi_t^\top \right) \geq c T$. Also, for \eqref{eq:cov_problem} we show that $f_1(\phi_{1:T})$ and $f_2(\phi_{1:T})$ grow at most poly-logarithmically with $T$, leading to Theorem~\ref{thm:cov1}.

\section{Linear system identification with known states}
\label{sec:background}

We ground our discussion of CM and EM by looking at the case when the states $\beta_t$ are given. In the fully observed case $\Astar$ can be estimated by the ordinary least squares (OLS) estimate
\begin{align*}
\Ahat = \left(\sum_{t = 0}^{T - 1} \beta_{t + 1} \beta_t^\top \right) \left(\sum_{t = 0}^{T - 1} \beta_t \beta_t^\top \right)^{-1}. 
\end{align*}
In the partially observed case we cannot compute $\beta_{t + 1} \beta_t^\top$ and $\beta_t \beta_t^\top$ directly. Nonetheless, EM and CM get a handle on these quantities in different ways. For example, EM uses the available data and an initial guess of the unknown parameters $\Astar$ to compute conditional expectations of  $\beta_{t + 1} \beta_t^\top$ and $\beta_t \beta_t^\top$. Then, EM uses these conditional expectations to update it's estimate of $\Astar$, repeating these steps until convergence. 
When the dynamics are stable the averages $\frac{1}{T}\sum_{t = 0}^{T - 1} \beta_{t} \beta_t^\top$ and $\frac{1}{T}\sum_{t = 0}^{T - 1} \beta_{t + 1} \beta_t^\top$ converge to $\sscov$ and $\Astar \sscov$  respectively. Despite not having access to the hidden states, CM estimates $\Astar \sscov$ and $\sscov$. 

Recent results have elucidated the sample complexity of estimating $\Astar$ when the states are given. 
Generally, theoretical results show that the sample complexity of estimating a Markov chain increases with the mixing time of the chain. These analyses exploit the fact that data points that occur with sufficient time delay are approximately independent. Then, they claim that the effective number of data points is equal to the total number divided by the mixing time. However, in the case of linear dynamical systems this line of reasoning does not capture the true relationship between sample complexity and mixing time. \citet{simchowitz2018learning} showed that given a trajectory  produced by the dynamics $\beta_{t + 1} = \Astar \beta_t + w_t$ with $\rho(\Astar) \leq 1$ one can use the ordinary least squares estimator
$
\Ahat = \argmin_A \sum_{t = 0}^{T - 1} \|A\beta_t - \beta_{t + 1}\|^2
$
to obtain an estimate $\Ahat$ that, with probability at least $1 - \delta$, has the following guarantee on the estimation error (informally):
\begin{align}
\|\Ahat - \Astar \| \leq c \sqrt{\frac{d\log(T/ \delta)}{T \lambda_{\min}(\sum_{t = 0}^T \Astar (\Astar^\top)^t)}},
\end{align}
where $c$ is a universal constant. This result is counterintuitive because it applies to all marginally unstable systems (i.e., those with $\rho(\Astar) \leq 1$) and, furthermore, it implies that when all of $\Astar$'s eigenvalues have magnitude one, the estimation rate is $\widetilde{\Ocal}\left(\frac{\sqrt{d}}{T}\right)$ instead of the usual $\widetilde{\Ocal}\left(\sqrt{\frac{d}{T}}\right)$. In other words, in some situations, as the mixing time of the system increases, the estimation rate improves. For scalar linear systems, this property has been know since the work of \citet{white1958limiting}. \citet{sarkar2019near} proved such a guarantee for a class of unstable linear systems and \citet{simchowitz2019learning} derived a similar guarantee for partially observable systems. 

Therefore, when the state is directly observable a less stable system is easier to estimate. It remains an open question to determine whether the same phenomenon occurs for \eqref{eq:main_problem}. \citet{douc2011consistency} showed that the maximum likelihood estimate (MLE) is consistent for the estimation of a class of Markov chains that contains stable partially observed linear systems. However, to the best of knowledge, proving that the MLE is consistent for the estimation of systems that are linear, marginally unstable, and partially observed is an open question.


\section{Expectation-Maximization (EM)}
\label{sec:em}

The classical expectation-maximization method (EM) that has already been applied to \eqref{eq:main_problem} \cite{shumway1982approach}. While EM performs well for certain instances of problem ~\eqref{eq:main_problem}, it has several drawbacks that motivated us to search for a different estimator. 

EM is difficult to analyze theoretically and it can converge to a local maximizer of the likelihood when it is not initialized close enough to a global maximizer, leading to a large estimation error \cite{balakrishnan2017statistical}. Furthermore, in our experience a straightforward implementation of EM is numerically unstable when applied to \eqref{eq:main_problem} with $x_t \iid \Ncal(0, I_\statedim)$, a problem that does not occur with a constant $x_t$. Finally, EM needs to know something about the distributions of the process and observation noise processes. Classically, the noise processes are assumed Gaussian.

EM approximately maximizes the likelihood of a probability model that depends on unobserved latent variables. In fact, EM was one of the first estimation methods used to tackle problem \eqref{eq:main_problem} \cite{shumway1982approach}. 
The maximum likelihood estimate $A^{\MLE}$, as the name suggests, is the maximizer of the likelihood $\PP_A \left(\vecx, \vecy\right)$. The likelihood is a nonconvex function of $A$ and it is not known how to provably find $A^{\MLE}$. EM is a local alternating maximization algorithm that is guaranteed to converge to a local maximizer of the likelihood. 

EM starts with an initial guess of $\Astar$ and alternates between two steps. In the E-step it constructs a lower bound on the likelihood of the data $\{(x_t, y_t)\}_{t = 0}^{T - 1}$ by computing the conditional covariances $S_{t} = \EE[\beta_{t } \beta_t^\top | \vecx, \vecy, A^\prime]$ and $S_{t, t - 1} = \EE[\beta_{t + 1} \beta_t^\top | \vecx, \vecy, A^\prime]$. To compute $S_{t}$ and $S_{t, t - 1}$ EM uses the Kalman filter to compute the conditional expectations with respect to past data: $\EE[\beta_t | x_{0:t}, y_{0:t}, A^\prime]$, $\EE[\beta_t \beta_t^\top | x_{0:t}, y_{0:t}, A^\prime]$, and $\EE[\beta_{t + 1} \beta_t^\top| x_{0:t}, y_{0:t}, A^\prime]$, followed by a backwards pass over the data to smooth these conditional expectations. In the M-step, EM simply updates the current guess of $\Astar$ to $\left(\sum_{t = 1}^{T - 1} S_{t, t - 1}\right)\left(\sum_{t= 1}^{T - 1} S_{t - 1}\right)^{-1}$. The detailed steps are shown by \citet{shumway2000time}. 

Analyses of EM are difficult even when presented with i.i.d. data. While one could follow the analysis of \citet{balakrishnan2017statistical} to guarantee that EM converges to $A^{\MLE}$ if initialized close enough, we believe this approach would lead to a result that suggests EM performs poorly when $\Astar$ 
is close to being unstable. However, given what we know from the performance of the maximum likelihood estimate (the OLS estimate) in the fully observed case, it is likely that EM would perform better when $\Astar$ has eigenvalues close to the unit circle. 
In fact, EM operates by computing the best guess of the hidden states $\beta_t$ given the observed data. Therefore, it is likely that less stable systems offer an additional benefit to EM. When $\Astar$ has larger eigenvalues the dynamics have longer memory, which allows for a better estimation of the hidden states. For example, if $\Astar$ was the zero matrix, then the dynamics would have no memory and the observations $y_{t + 1}$ and $y_{t - 1}$ would not hold any useful information for inferring $\beta_t$. 


In contrast, the performance of CM degrades as $\rho(\Astar)$ approaches one. Nonetheless, our method does not depend on a good initialization, does not require knowledge concerning the noise processes, and admits a simple theoretical guarantee.

\section{Related work}

Linear dynamical systems have been studied in depth as models for time-series data and we cannot do justice to its rich history or to the broader work on linear system identification. \citet{brockwell2009time} and \citet{ljung1987system} discuss these topics in detail. We focus on the closest and most recent related works. 

\cite{chow1981econometric} first proposed \eqref{eq:main_problem} as a model for time-varying regression, with \citet{carraro1984identification} later generalizing their analysis. The method studied by \citet{carraro1984identification} resembles in some ways are own. As an intermediate step it estimates the covariances between different innovations of a Kalman filter, but compared to CM it is complicated and no finite-sample guarantees have been provided. 

\citet{shumway1982approach} developed model \eqref{eq:main_problem} from a different perspective and proposed EM as a viable estimator. They were interested in modelling time-series with missing data and in their model $x_t$ is a matrix of zeros and ones that encodes which entries are available and which are missing in a set of time-series. Then, \citet{khan2007expectation} applied EM to fit a slightly generalized model \eqref{eq:main_problem} on EEG time series; they assumed the hidden states $\beta_t$ follow an auto-regressive model. 

\citet{lubik2015time} noted that macroeconomic time-series often display nonlinear behaviors that can be captured with time-varying linear models. In particular, they studied a time-varying auto-regressive model in which the unknown parameters evolve randomly and proposed a Bayesian inference method. In a similar vein, \citet{isaksson1987identification} studied actuated linear dynamics with randomly evolving parameters and proposed an adaptive Kalman filtering approach for identification. Most prior work on the identification of time-varying linear systems considered the case in which multiple experiments can be conducted on the system whose parameters undergo the same variation \cite{liu1997identification,majji2010time}. 

The recent development of non-asymptotic guarantees in statistics has led to similar results in system identification. In addition to the works previously mentioned in Sections \ref{sec:challenge} and \ref{sec:background}, we mention a few other such works. \cite{hardt2016gradient} showed that a stochastic gradient descent can recover the parameters of certain linear systems in polynomial time. \cite{sarkar2021finite} proved a non-asymptotic guarantee for the identification of partially observed linear dynamics of unknown order. \cite{hazan2017learning} and \cite{hazan2018spectral} proposed spectral filtering methods that predict with sublinear regret the next output of unknown partially observed linear systems. When actuation is available \citet{wagenmaker2020active} showed that active learning can be used for a faster identification of linear dynamics. Finally, \citet{tsiamis2020sample} analyzed the sample complexity of designing  a good Kalman filter based on observed data from an unknown linear systems.




\section{Conclusion and open questions}{}

Counterintuitively, we have shown that it is possible to fit a model for time-varying linear regression by combining just two ordinary least squares estimates. In contrast to EM, our method does not require a good initialization nor knowledge regarding the noise processes. Moreover, CM admits a simple theoretical guarantee that shows the estimation error decays at a $\Ocal(1/\sqrt{T})$ rate.

CM has its own drawbacks. Firstly, it is applicable only to stable systems and its performance degrades as $\rho(\Astar)$ approaches one. Based on the statistical rate achievable when the  states $\beta_t$ are directly observable it should be possible to have a method that performs better when $\Astar$ is less stable. Secondly, we believe our guarantee for CM depends suboptimally on the failure probability $\delta$, the dimension $\statedim$, and the variances $\sigma_\epsilon^2$ and $\Sigma_w$. We leave this issue and several other open questions for future work:
\begin{itemize}
\item Recent work in robust statistics have led to new algorithms and guarantees for linear regression with heavy-tailed noise \citet{cherapanamjeri2020algorithms,depersin2020spectral}. Can one extend those results to our setting? These methods may address the suboptimal dependence of CM on some of the quantities previously mentioned. 

\item Similarly to the fully observed setting \cite{simchowitz2018learning}, can one guarantee that the performance of the  maximum-likelihood estimator improves as $\Astar$ becomes less stable? 

\item Can one characterize the statistical rate of EM? Determining how closely to the MLE one needs to initialize EM would be of particular interest. 

\item What can one say when the hidden states $\beta_t$ evolve according to more general linear dynamics or even nonlinear dynamics? 
\end{itemize}

\bibliographystyle{abbrvnat}   
\bibliography{dynamiclr} 

\appendix


\section{Proof of bound~\ref{eq:thm_part_I} of Theorem~\ref{thm:cov1}}
\label{sec:proof_cov_thm}

In this section we analyze formally the performance of the method relying on covariance estimation.

The particular structure of the noise terms $\zeta_t$ and $\xi_t$ appearing in the squared observation equations \eqref{eq:cov_problem} is important. If we denote by $Z_t$ the zero-mean random matrix $\beta_t\beta_t^\top - \Sigma_\infty$, we have 
\begin{align}
\zeta_t &= x^\top_t Z_t x_t + 2 \epsilon_t x_t^\top \beta_t + (\epsilon^2 - \sigma_\epsilon^2), \nonumber\\
\xi_t &= x^\top_{t + 1} A  Z_t  x_{t} +\epsilon_{t + 1} x_t^\top \beta_t + x_t^\top \beta_t w_t^\top x_{t + 1}  + \epsilon_t  x_{t + 1}^\top A \beta_t + \epsilon_t \epsilon_{t + 1} + \epsilon_t w_t^\top x_{t + 1}.
\label{eq:noise_cov}
\end{align} 
Therefore, since $\epsilon_t$ and $\beta_0$ are independent and have zero means, $\zeta_t$ and $\xi_t$ are zero-mean random variables even when we condition on $x_t$ and $x_{t + 1}$. 

Before turning to the analysis we introduce some useful notation. We use $\svec \colon \Sym_\outdim \to \RR^{\outdim(\outdim + 1) / 2}$ to denote a flattening of symmetric matrices that does not repeat the off-diagonal entries twice, but scales them by $\sqrt{2}$. This choice ensures that for any symmetric matrix $M$ we have $\|M\|_F = \|\svec(M)\|_2$. We also denote $\phi_t := \svec(x_t x_t^\top)$. Below we prove that $\sum_{t = 0}^{T - 1} \phi_t \phi_t^\top$ is invertible with high probability when $T$ is sufficiently large. Then, since the variance $\sigma_\epsilon^2$ of the observation noise is assumed known, we have
\begin{align}
\svec(\sshat) &= \left(\sum_{t = 0}^{T - 1} \phi_t \phi_t^\top \right)^{-1} \left(\sum_{t = 0}^{T - 1}\phi_t (y_t^2 - \sigma_\epsilon^2)\right)\text{, and} \nonumber \\
\svec(\sshat) - \svec(\sscov) &= \left(\sum_{t = 0}^{T - 1} \phi_t \phi_t^\top \right)^{-1} \left(\sum_{t = 0}^{T - 1}\phi_t \zeta_t\right) \label{eq:cov_est_error}
\end{align}
Similar identities hold for the estimate $\widehat{M}$ of $\Astar \sscov$. Since the matrices $\Astar \sscov$ and $x_t x_{t + 1}^\top$ are not symmetric in general, for the analysis of this component we work with a conventional flattening operator $\vec \colon \Sym_\statedim \to \RR^{\statedim^2}$ instead. We return to the proof of \eqref{eq:thm_part_II} in Appendix~\ref{sec:thm_partII}.

To prove that \eqref{eq:cov_est_error} note that all the assumptions of Lemma~\ref{lem:meta} are satisfied. We are left to lower bound the minimum eigenvalue of the design matrix $\sum \phi_t \phi_t^\top$ and to upper bound the correlations of the noise terms $\zeta_t$. We first lower bound the minimum eigenvalue. 

\subsection{Lower bounding the design matrix}
\label{sec:design_matrix}

We wish to show that $\lambda_{\min} \left(\sum_{t = 0}^{T - 1} \phi_t \phi_t^\top  \right) = \Omega(T)$ with high probability. The first thing we observe is that $\EE \phi_t \phi_t^\top$ is non-degenerate:
\begin{align}
\Sigma_\phi := \EE \phi_t \phi_t^\top = 
\begin{bmatrix}
2 I_{\statedim (\statedim - 1)/2} & 0 \\
0 & 2 I_{\statedim} + \allones \allones^\top
\label{eq:cov_phi}
\end{bmatrix},
\end{align}
which is a matrix with the smallest eigenvalue equal to $2$. Now, to lower bound the minimum eigenvalue of $\sum_t \phi_t \phi_t^\top$
we rely on a standard covering argument. Namely, we first show that for any fixed unit vector $v\in \RR^{\statedim(\statedim + 1)/ 2}$  we have $\sum_t (v^\top \phi_t)^2 = \Omega(T)$. Then, we choose an appropriately fine covering of $\Sbb^{\statedim(\statedim + 1)/ 2 - 1}$ and apply the union bound. The main technical challenge with this approach is that the resolution of the cover has to be chosen in terms of an upper bound on the maximum eigenvalue of $\sum_t \phi_t \phi_t^\top$. Following this argument we can offer the following guarantee. 

\begin{prop}
Let $x_t \iid \Ncal(0, I_\outdim)$ and $\phi_t = \svec(x_t x_t^\top)$. Also, let $m_\phi = \sup_{\|v\| = 1} \EE (v^\top \phi_t)^4$. Then, there exists a positive constant $c$ so that as long as $T \geq c \outdim^4 \log(c \frac{\outdim^2}{\delta})$ we have 
\begin{align}
\PP\left(\lambda_{\min} \left(\sum_{t = 0}^{T - 1} \phi_t \phi_t^\top \right) > T\right) \geq 1 - \delta.
\end{align}
\label{prop:guarantee_on_exploration}
\end{prop}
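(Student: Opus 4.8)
The plan is to combine a one-sided Chernoff bound for each \emph{fixed} direction with a covering argument over the sphere, where the resolution of the net is fixed using a crude but high-probability upper bound on the largest eigenvalue of the design matrix. Write $M := \sum_{t=0}^{T-1}\phi_t\phi_t^\top$ and $k := \outdim(\outdim+1)/2$ for the ambient dimension of the $\phi_t$. My starting point will be the identity \eqref{eq:cov_phi}, which gives $\EE(v^\top\phi_t)^2 = v^\top\Sigma_\phi v \ge \lambda_{\min}(\Sigma_\phi) = 2$ for every unit vector $v$. So each direction has a population second moment bounded away from zero, and the entire task reduces to showing that the empirical second moments $\tfrac1T\sum_t(v^\top\phi_t)^2$ stay uniformly close to their means over all $v$.

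First I would establish the single-direction estimate. Fix a unit vector $v$ and set $U_t := (v^\top\phi_t)^2 \ge 0$; these are i.i.d.\ nonnegative with $\EE U_t = v^\top\Sigma_\phi v \ge 2$ and $\EE U_t^2 = \EE(v^\top\phi_t)^4 \le m_\phi$. The crucial point is that, because $U_t \ge 0$, its \emph{lower} tail is governed by only its first two moments: plugging $e^{-x}\le 1-x+x^2/2$ (valid for $x\ge 0$) into a Chernoff bound on $-\sum_t U_t$ and optimizing the free parameter yields, for every $s\in(0,2)$,
\[
\PP\Big(\tfrac1T\textstyle\sum_{t=0}^{T-1}U_t \le v^\top\Sigma_\phi v - s\Big)\le \exp\!\big(-Ts^2/(2m_\phi)\big).
\]
A direct Gaussian moment computation shows $m_\phi = \sup_{\|V\|_F=1}\EE(x_t^\top V x_t)^4 \le c\,\outdim^2$ (the extremal $V$ being proportional to $I$), so the per-direction failure probability decays like $\exp(-cTs^2/\outdim^2)$. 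This is exactly why $m_\phi$ is singled out in the statement, and it lets me sidestep any sub-Weibull machinery for the degree-four chaos $(v^\top\phi_t)^2$.

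Next I would control $\lambda_{\max}(M)$ crudely, since this is what calibrates the net. Using $\lambda_{\max}(M)\le\tr(M)=\sum_t\|x_t\|^4$ with i.i.d.\ summands of mean $\outdim(\outdim+2)$, a scalar upper-tail (Chernoff/Bernstein) bound gives $\lambda_{\max}(M)\le 4T\outdim^2$ with probability at least $1-\delta/2$ once $T\gtrsim\poly(\outdim)\log(1/\delta)$. Bounding $\lambda_{\max}$ through the trace, rather than through a second covering argument, is the move that avoids circularity. On this event I would fix an $\eta$-net $\Cset$ of $\Sbb^{k-1}$ with $\eta = 1/(16\outdim^2)$ and $|\Cset|\le(3/\eta)^k$, so that the standard discretization inequality $\lambda_{\min}(M)\ge \min_{v\in\Cset} v^\top M v - 2\eta\,\lambda_{\max}(M)$ loses at most $2\eta\,\lambda_{\max}(M)\le T/2$.

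Finally I would union-bound the single-direction estimate (taking $s=\tfrac14$) over $\Cset$: this fails with probability at most $|\Cset|\exp(-cT/\outdim^2)\le(48\outdim^2)^{k}\exp(-cT/\outdim^2)$, which drops below $\delta/2$ exactly when $T\ge c\,\outdim^4\log(c\,\outdim^2/\delta)$, matching the stated threshold because $k\asymp\outdim^2$. On the intersection of the two events one has $\min_{v\in\Cset}v^\top M v\ge \tfrac74 T$ while the discretization loss is at most $\tfrac12 T$, so $\lambda_{\min}(M)\ge \tfrac74 T-\tfrac12 T>T$, and both events hold with probability at least $1-\delta$. The main obstacle is the heavy-tailed nature of $(v^\top\phi_t)^2$; the resolution is that only its second moment $m_\phi$ enters the lower tail. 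The one remaining delicate point is ensuring that the bound on $\lambda_{\max}(M)$ that sets $\eta$ is genuinely high-probability rather than merely in expectation, which the trace-based scalar concentration supplies.
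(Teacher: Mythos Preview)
Your argument follows the same architecture as the paper's: a one-sided Bernstein lower-tail bound in each fixed direction (the paper's Lemma~\ref{lem:lower_bound_cov_single_v}), an $\epsilon$-net whose resolution is calibrated by a high-probability upper bound on $\lambda_{\max}$, and a union bound over the net. The identification $m_\phi = \Ocal(\statedim^2)$ and the final arithmetic match as well.

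The one substantive difference is how $\lambda_{\max}(M)$ is controlled. The paper (Lemma~\ref{lem:upper_bound_cov}) uses only Markov's inequality on the trace, obtaining $\lambda_{\max}(M) \le 4T\statedim^2/\delta$; the extra factor $1/\delta$ is then absorbed into the net resolution $\epsilon = \delta/(32\statedim^2)$, which adds only an extra $\statedim^2\log(1/\delta)$ to $\log N_\epsilon$ and hence does not alter the $c\statedim^4\log(c\statedim^2/\delta)$ threshold. Your route instead targets the $\delta$-free bound $4T\statedim^2$, but the tool you invoke is not quite available: the summands $\|x_t\|^4$ are only sub-Weibull of order $1/2$ (their moment generating function diverges for every positive argument), so neither a raw Chernoff bound nor the standard sub-exponential Bernstein inequality applies to their upper tail. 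This is repairable---a sub-Weibull Bernstein inequality or a truncation argument will do, typically at the price of a $\polylog(1/\delta)$ rather than $\log(1/\delta)$ burn-in---but the paper's Markov step is both more elementary and already sufficient.
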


This result suggests that $\statedim^4$ samples are needed before $\sum_t \phi_t \phi_t^\top$ is invertible with high probability. Despite this burn in period, Proposition~\ref{prop:guarantee_on_exploration} shows that $\lambda_{\min}\left( \sum_{t = 0}^{T - 1} \phi_t \phi_t^\top \right) = \Omega(T)$.  


To prove Proposition~\ref{prop:guarantee_on_exploration} we start by lower bounding $\sum_t (v^\top \phi_t)^2$ for a fixed unit vector $v$. 

\begin{lemma}
\label{lem:lower_bound_cov_single_v}
Let $x_t \iid \Ncal(0, I_\outdim)$ and $\phi_t = \svec(x_t x_t^\top)$.
Then, for a given $v\in\Sbb^{\outdim(\outdim + 1)/ 2 - 1}$, we have 
\begin{align}
\PP\left( \sum_{t = 0}^{T - 1} (v^\top \phi_t)^2 \geq 2 T - \sqrt{c T \statedim^2 \log(1/\delta)}\right) \geq 1 - \delta. 
\end{align}

\end{lemma}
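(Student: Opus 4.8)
The plan is to reduce the claim to a lower-tail concentration bound for a sum of i.i.d.\ squared Gaussian quadratic forms, and then exploit the fact that these summands are nonnegative. First I would use the isometry property of $\svec$: since $\svec \colon \Sym_\outdim \to \RR^{\outdim(\outdim+1)/2}$ preserves inner products, a unit vector $v\in\Sbb^{\outdim(\outdim+1)/2-1}$ corresponds to a unique symmetric matrix $V$ with $\svec(V)=v$ and $\|V\|_F=\|v\|=1$. Then $v^\top \phi_t = \langle \svec(V),\svec(x_t x_t^\top)\rangle = \langle V, x_t x_t^\top\rangle = x_t^\top V x_t$, so that $\sum_{t=0}^{T-1}(v^\top\phi_t)^2 = \sum_{t=0}^{T-1} Y_t$ where $Y_t := (x_t^\top V x_t)^2$ are i.i.d.\ and nonnegative.

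Next I would control the first two moments of $Y_t$. Using the standard Gaussian identity $\EE[(x_t^\top V x_t)^2] = (\tr V)^2 + 2\|V\|_F^2 = (\tr V)^2 + 2 \geq 2$, so $\mu := \EE Y_t \geq 2$ and $\EE\sum_t Y_t = T\mu \geq 2T$; this is exactly the $\lambda_{\min}(\Sigma_\phi)=2$ lower bound read off from \eqref{eq:cov_phi}. Moreover $|\tr V|\leq \sqrt{\statedim}\,\|V\|_F = \sqrt{\statedim}$, so $\mu = (\tr V)^2 + 2 \leq \statedim + 2$. For the second moment I would bound $\EE Y_t^2 = \EE[(x_t^\top V x_t)^4]$ by a Gaussian fourth-moment computation (or hypercontractivity for degree-two chaos, giving $\EE Y_t^2 \leq 81(\EE Q^2)^2$ with $Q = x_t^\top V x_t$); using $\|V\|_F=1$ and $|\tr V|\leq\sqrt{\statedim}$ this yields $\EE Y_t^2 = O(\statedim^2)$.

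The key idea is that, although $Y_t$ is heavy-tailed (the square of a sub-exponential Gaussian chaos), I only need a \emph{one-sided} lower bound on $\sum_t Y_t$, and the lower deviation of a sum of \emph{nonnegative} variables is governed only by boundedness from below and the variance, not by the heavy upper tail. Concretely, I would apply Bernstein's inequality to the centered variables $W_t := \mu - Y_t$, which satisfy $\EE W_t = 0$, are bounded above by $W_t \leq \mu \leq \statedim+2$, and have $\sum_t \EE W_t^2 = T\,\var(Y_t) \leq T\,\EE Y_t^2 = O(T\statedim^2)$. Bernstein then gives $\PP(\sum_t W_t \geq s) \leq \exp\!\big(-s^2/(2(O(T\statedim^2)+(\statedim+2)s/3))\big)$. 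In the burn-in regime $T \geq c\,\statedim^4\log(c\statedim^2/\delta)$ the variance term dominates, so setting the exponent equal to $\log(1/\delta)$ produces $s = O(\sqrt{T\statedim^2\log(1/\delta)})$. Since $\sum_t W_t = T\mu - \sum_t Y_t$, this is exactly the statement $\PP(\sum_t Y_t \leq T\mu - s) \leq \delta$, and combining with $T\mu \geq 2T$ gives $\sum_t(v^\top\phi_t)^2 \geq 2T - \sqrt{cT\statedim^2\log(1/\delta)}$ with probability at least $1-\delta$.

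The main obstacle is precisely the heavy-tailedness of $Y_t$: a direct two-sided concentration (or a sub-exponential Bernstein applied to $Y_t$ itself) would be delicate and would not cleanly yield the $\log(1/\delta)$ dependence. The crux of the argument is the observation that nonnegativity makes $\mu - Y_t$ bounded above, so the classical bounded-above Bernstein inequality applies to the lower tail with only the second-moment bound $\EE Y_t^2 = O(\statedim^2)$ as input. The only additional care needed is verifying that the sub-Gaussian regime of Bernstein is the operative one in the stated range of $T$, which follows comfortably from the $T \gtrsim \statedim^4$ burn-in.
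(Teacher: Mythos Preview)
Your proposal is correct and matches the paper's proof: both identify $v^\top\phi_t = x_t^\top V x_t$ for a symmetric $V$ with $\|V\|_F=1$, lower bound $\EE[(x_t^\top V x_t)^2]\ge 2$, upper bound $\EE[(x_t^\top V x_t)^4]=O(\statedim^2)$ (the paper via Magnus's formula, you via hypercontractivity), and then apply a one-sided Bernstein to the nonnegative i.i.d.\ summands $Y_t=(x_t^\top V x_t)^2$. The only cosmetic difference is the Bernstein variant: the paper invokes the one-sided inequality for \emph{nonnegative} random variables (Wainwright, Ch.~2), which yields $\PP(\bar Y\le \mu - z)\le \exp(-Tz^2/(2\EE Y^2))$ with no linear term in the denominator, so no burn-in on $T$ is needed at all---the lemma holds for every $T$, and your appeal to the $T\gtrsim \statedim^4$ regime, while sufficient, is unnecessary.
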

\begin{proof}
Since the covariance of $\phi_t$ is equal to \eqref{eq:cov_phi}, we have $\EE (v^\top \phi_t)^2  = \EE v^\top \phi_t \phi_t^\top v = v^\top \Sigma_\phi v \geq 2$. Then, applying one-sided Bernstein's  inequality for non-negative random variables  \cite[see][Chapter 2]{wainwright2019high}, we find that
\begin{align}
\PP\left( \frac{1}{T}\sum_{t = 0}^{T - 1} (v^\top \phi_t)^2 \leq 2 - z \right) \leq \exp\left(-\frac{T z^2}{2 \EE[(v^\top \phi_t)^4]}\right). 
\end{align}
Let us denote $m_\phi := \sup_{\|v\| = 1} \EE (v^\top \phi_t)^4$.  Then, we can choose $z = \sqrt{\frac{2 m_\phi \log(1/\delta)}{T}}$. 

To complete the proof we need to upper bound $m_\phi$. Note that for any unit vector $v$ there exists a symmetric matrix $V$ with $\|V\|_F \leq 2$ such that $v^\top \phi = x^\top V x$, where $\phi = \svec(xx^\top)$. Hence, we must upper bound $\EE (x^\top V x)^4$ with $x \sim \Ncal(0, I_\statedim)$. According to \citet{magnus1979expectation} we have 
\begin{align*}
\EE (x^\top V x)^4 = \tr(V)^4 + 12 \tr(V)^2 \tr(V^2) + 32\tr(V)\tr(V^3) + 12 \tr(V^2)^2 + 48 \tr(V^4).
\end{align*}
Since $\|V\|_F \leq 2$, we can easily find that $\tr(V)^4 \leq 16 \statedim^2$ with the other terms having a lower order dependence on $\statedim$. Hence, $\EE (x^\top V x)^4 = \Ocal(\statedim^2)$. 
\end{proof}

The next lemma is useful in controlling the size of the maximum eigenvalue of $S_T = \sum_{t = 0}^{T - 1} \phi_t \phi_t^\top$, which is needed in the covering argument. 

\begin{lemma}
\label{lem:upper_bound_cov}
If $x_t \iid \Ncal(0, I_\outdim)$ and $\phi_t = \svec(x_t x_t^\top)$, we have 
\begin{align}
\PP\left(\lambda_{\max} \left(\sum_{t = 0}^{T - 1} \phi_t \phi_t^\top \right) < \frac{4T \outdim^2}{\delta}\right) \geq 1 - \delta
\end{align}
\end{lemma}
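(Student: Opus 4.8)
The plan is to avoid any sharp matrix concentration and instead control $\lambda_{\max}$ crudely through the trace, which suffices because Proposition~\ref{prop:guarantee_on_exploration} only needs a polynomial-in-$1/\delta$ upper bound on the largest eigenvalue to fix the resolution of its net. Write $S_T := \sum_{t = 0}^{T-1} \phi_t \phi_t^\top$. Since $S_T$ is positive semidefinite, $\lambda_{\max}(S_T) \le \tr(S_T)$, so it is enough to show that $\tr(S_T) < 4 T \outdim^2 / \delta$ with probability at least $1 - \delta$. The entire argument then reduces to a one-sided tail bound on a sum of i.i.d.\ scalars, for which Markov's inequality is the natural tool.

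The key computation is to evaluate $\tr(S_T)$ explicitly. Because $\svec$ is an isometry on symmetric matrices, $\|\phi_t\|_2^2 = \|x_t x_t^\top\|_F^2 = \|x_t\|^4$, and therefore
\begin{align*}
\tr(S_T) = \sum_{t = 0}^{T - 1} \|\phi_t\|_2^2 = \sum_{t = 0}^{T - 1} \|x_t\|^4 .
\end{align*}
Next I would compute the mean of each summand: since $\|x_t\|^2 \sim \chi^2_\outdim$ for $x_t \iid \Ncal(0, I_\outdim)$, we have $\EE \|x_t\|^4 = \outdim(\outdim + 2)$, so $\EE\, \tr(S_T) = T \outdim(\outdim + 2)$. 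Applying Markov's inequality and using $\outdim(\outdim+2) \le 4\outdim^2$ for $\outdim \ge 1$ gives
\begin{align*}
\PP\!\left(\tr(S_T) \geq \frac{4 T \outdim^2}{\delta}\right) \le \frac{T \outdim(\outdim + 2)}{4 T \outdim^2 / \delta} = \frac{(\outdim + 2)\,\delta}{4\outdim} \le \delta .
\end{align*}
Combining this with $\lambda_{\max}(S_T) \le \tr(S_T)$ from the first paragraph yields the claim.

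There is no real obstacle here: the only points requiring care are the isometry identity $\|\phi_t\|_2 = \|x_t\|^2$ (which is exactly the reason $\svec$ scales off-diagonal entries by $\sqrt{2}$) and the elementary fourth-moment computation. The one thing worth flagging is that the resulting bound is deliberately loose, giving only a $1/\delta$ tail rather than the sub-exponential decay one could obtain from, e.g., a covering argument on $\svec(x_t x_t^\top)$. I would not pursue the sharper bound, since it is unnecessary: the $\Ocal(\outdim^2/\delta)$ estimate on $\lambda_{\max}$ is entirely adequate for setting the net resolution in Proposition~\ref{prop:guarantee_on_exploration}, and a polynomial dependence on $1/\delta$ is consistent with the Chebyshev-based, medium-probability style used throughout the analysis.
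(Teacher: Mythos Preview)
Your proof is correct and essentially identical to the paper's: both bound $\lambda_{\max}(S_T)$ by $\tr(S_T)$ and apply Markov's inequality, using $\EE\,\tr(S_T)=T\outdim(\outdim+2)\le 4T\outdim^2$. The only cosmetic differences are that the paper applies Markov to $\lambda_{\max}$ first and then uses $\EE\lambda_{\max}\le\EE\tr$, and computes $\EE\,\tr(S_T)$ via $\tr(\Sigma_\phi)$ rather than via the $\chi^2_\outdim$ fourth moment; these are the same calculation.
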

\begin{proof}
Markov's inequality implies 
\begin{align}
\PP\left(\lambda_{\max} \left(S_T \right) > \frac{4T \outdim^2}{\delta}\right) &\leq \frac{\delta}{4T \outdim^2} \EE\left[\lambda_{\max} \left(S_T \right)\right]\\
&\leq \frac{\delta}{4 T \outdim^2} \EE\left[ \tr\left(S_T \right)\right] = \frac{\delta}{4 \outdim^2} \tr\left(\Sigma_\phi\right). 
\end{align}
From \eqref{eq:cov_phi} we know that $\tr(\Sigma_\phi) \leq 4d^2$, which gives the desired conclusion. 
\end{proof}

Let us return to the proof of Proposition~\ref{prop:guarantee_on_exploration}. Let $v_1, v_2, \ldots, v_{N_\epsilon}$ be a cover of $\Sbb^{\outdim(\outdim + 1) /2 - 1}$ such that for any $v\in \Sbb^{\outdim(\outdim + 1) /2 - 1}$ there exists $i$ such that $\|v - v_i\| \leq \epsilon$. It is well known that one can find $N_\epsilon \leq (2/ \epsilon + 1)^{\outdim (\outdim + 1)/ 2 - 1}$ vectors $v_i$ that satisfy this property. 

For a given unit vector $v$ choose the vector $v_i$ in the cover such that $\|v - v_i\| \leq \epsilon$ and denote $\Delta_i = v - v_i$. Then, we have
\begin{align*}
v^\top S_T v &= (v_i + \Delta_i)^\top S_T (v_i + \Delta_i) = v_i^\top S_T v_i + 2\Delta_i^\top S_T v_i + \Delta_i^\top S_T \Delta_i\\
&\geq v_i^\top S_T v_i  - 2\epsilon \|S_T\|,
\end{align*}
where the last inequality follows because $\Delta_i^\top S_T \Delta_i \geq 0$ and $\|v_i\| = 1$. Since this argument holds for any unit vector $v$, we can take infimum of both sides to find 
\begin{align}
\label{eq:cover_min_eig}
\inf_{\|v\| = 1} v^\top S_T v &\geq \inf_i v_i^\top S_T v_i  - 2\epsilon \|S_T\|. 
\end{align}
Let us denote $\Ecal = \{\lambda_{\max} \left(S_T \right) < \frac{8 T \outdim^2}{\delta}\}$. Then, we can write 

\begin{align*}
\PP\left(\lambda_{\min} \left(S_T \right) < T\right) \leq \PP\left(\left\{\lambda_{\min} \left(S_T \right) < \T \right\}\cap \Ecal \right) + \PP(\Ecal^c). 
\end{align*}
Lemma~\ref{lem:upper_bound_cov} guarantees that $\PP(\Ecal^c) \leq \delta / 2$. 
Now, since $\lambda_{\min}(S_T) = \inf_{\|v\| = 1} v^\top S_T v$, we can apply \eqref{eq:cover_min_eig} to find
\begin{align*}
\PP\left(\left\{\lambda_{\min} \left(S_T \right) < T\right\} \cap \Ecal \right) &\leq \PP\left( \left\{\inf_i v_i^\top S_T v_i  - 2\epsilon \|S_T\| < T\right\}\cap \Ecal \right) \\
&\leq \PP\left( \inf_i v_i^\top S_T v_i  - 16\epsilon \frac{T \outdim^2}{\delta} < T \right)
\end{align*}
We choose $\epsilon = \delta/ (32 \outdim^2)$. Now, we union bound over $i$ and use Lemma~\ref{lem:lower_bound_cov_single_v} with $z = 1$ to find 
\begin{align}
\label{eq:union_bound_min_eig}
\PP\left(\left\{\lambda_{\min} \left(S_T \right) < T \right\} \cap \Ecal \right) \leq N_\epsilon \exp \left(-\frac{T}{8m_\phi}\right). 
\end{align}
Recall that $N_\epsilon \leq (2/\epsilon + 1)^{\outdim^2}$. Therefore, when 
\begin{align*}
T \geq 8m_\phi \log\left(2\frac{(64 \outdim^2 / \delta + 1)^{\outdim^2}}{\delta}\right) 
\end{align*}
the right hand side of \eqref{eq:union_bound_min_eig} is at most $\delta / 2$. Proposition~\ref{prop:guarantee_on_exploration} follows after some simple computations. 

\subsection{Bounding the conditional variance}
\label{sec:cov_ind_partI}

In this section, in order to upper bound $\Sigma_{e|x}$, we upper bound $\EE[\zeta_t^2 | x_{0:T}]$ and the magnitude of $\EE[\zeta_i \zeta_j | x_{0:T}]$. Recall that 
$\zeta_t = \phi_t^\top \psi_t + 2 \epsilon_t x_t^\top \beta_t + (\epsilon_t^2 - \sigma_\epsilon^2)$, where $\psi_t = \svec(\beta_t \beta_t^\top - \sscov)$ 
Since the cross terms arising from squaring $\zeta_t$ have mean zero, we have
\begin{align}
\EE[\zeta_t^2 | x_{t}] &= \EE \left[\left.(x^\top_t Z_t x_t)^2 + 2 \epsilon_t^2 x_t^\top \beta_t \beta_t^\top x_t + (\epsilon_t^2 - \sigma_\epsilon^2)^2 \right| x_t\right] \nonumber\\
&= 2(x_t^\top \sscov x_t)^2 + 2 \sigma_\epsilon^2 x_t^\top \sscov x_t + 2 \sigma_\epsilon^4\nonumber \\
&\leq 3(x_t^\top \sscov x_t)^2 + 3 \sigma_\epsilon^4.
\label{eq:zeta_bound}
\end{align}
Now we turn to $\EE[\zeta_t \zeta_{t^\prime} | x_{0:T}]$ with $t <  t^\prime$. Given the definition of $\zeta_t$ and since $\epsilon_t$ and $\epsilon^2 - \sigma_\epsilon^2$ are zero mean and independent of $\beta_t$ for all $t$, we find that 
\begin{align}
\label{eq:cross_term_first_decomp}
\EE[\zeta_t \zeta_{t^\prime} | x_{0:T}] &= \EE[ x_t^\top Z_t x_t x_{t^\prime}^\top Z_{t^\prime} x_{t^\prime}| x_{0:T}] = \EE[ \beta_t^\top x_t x_t^\top \beta_t  \beta_{t^\prime}^\top x_{t^\prime} x_{t^\prime}^\top \beta_{t^\prime}| x_{0:T}] - x_t^\top \sscov x_t x_{t^\prime}^\top \sscov x_{t^\prime}
\end{align}
Since $\beta_{t + 1} = \Astar \beta_t + w_t$, we know that 
\begin{align}
\beta_{t^\prime} = \Astar^{t^\prime - t}\beta_t + \underbrace{\sum_{j = 1}^{t^\prime - t} \Astar^{t^\prime - t - j} w_{j - 1 + t}}_{u_{t,t^\prime}}. 
\end{align}
The term $u_{t, t^\prime}$ has mean zero and is independent of $\beta_t$. Therefore, 
\begin{align}
\EE[ \beta_t^\top x_t x_t^\top \beta_t  \beta_{t^\prime}^\top x_{t^\prime} x_{t^\prime}^\top \beta_{t^\prime}| x_{0:T}] &= 
\EE[ \beta_t^\top x_t x_t^\top \beta_t  \beta_{t}^\top (A^{t^\prime - t})^\top x_{t^\prime} x_{t^\prime}^\top A^{t^\prime - t} \beta_{t}| x_{0:T}]\nonumber \\
&+ \EE\left[ \beta_t^\top x_t x_t^\top \beta_t  u_{t, t^\prime}^\top x_{t^\prime} x_{t^\prime}^\top u_{t, t^\prime}| x_{0:T}\right]
\label{eq:cross_term_decomposition}
\end{align}

Given that $\beta_t \sim \Ncal(0, \sscov)$ and the results of \citet{magnus1979expectation}, the first term can be seen to be 
\begin{align}
x_t^\top \sscov x_t x_{t^\prime}^\top \Astar^{t^\prime - t} \sscov (\Astar^{t^\prime - t} )^\top x_{t^\prime} + 2 (x_t^\top \sscov (\Astar^{t^\prime - t} )^\top x_{t^\prime})^2
\end{align}
The second term of \eqref{eq:cross_term_decomposition} can be more easily computed because $\beta_t^\top x_t x_t^\top \beta_t $ and $u_{t, t^\prime}^\top x_{t^\prime} x_{t^\prime}^\top u_{t, t^\prime}$ are independent given $x_{0:T}$. Hence, the second term of \eqref{eq:cross_term_decomposition} is equal to 
\begin{align}
\label{eq:cross_term_second_term}
x_t^\top \sscov x_t x_{t^\prime}^\top \sum_{j = 0}^{t^\prime - t - 1} \Astar^j \Sigma_w (\Astar^j)^\top x_{t^\prime}. 
\end{align}
Recall that $\sscov = \sum_{j = 0}^{\infty} \Astar^j \Sigma_w (\Astar^j)^\top$ and hence $\sum_{j = t^\prime - t}^{\infty} \Astar^j \Sigma_w (\Astar^j)^\top  = \Astar^{t^\prime - t} \sscov (\Astar^{t^\prime - t})^\top$. Then, putting together \eqref{eq:cross_term_first_decomp}, \eqref{eq:cross_term_decomposition}, and \eqref{eq:cross_term_second_term} we find that 
\begin{align}
\EE[\zeta_t \zeta_{t^\prime} | x_{0:T}] &= 2 (x_t^\top \sscov (\Astar^{t^\prime - t} )^\top x_{t^\prime})^2
\end{align}

From the concentration Lipschitz functions of standard Gaussian random vectors we know that $\sup_{t = 0}\|x_t\| \leq \sqrt{d} + \sqrt{2\log(2T/\delta)}$ with probability at least $1 - \delta/2$. We can use this result to upper bound $\EE[\zeta_t^2 | x_{0:T}]$ and $\EE[\zeta_t \zeta_{t^\prime} | x_{0:T}]$. Note that we do this by upper bounding terms of the form $x_t^\top \sscov x_t$ by $\|\sscov\| \|x_t\|^2$. This upper bound is sub-optimal in general since $\EE x_t^\top \sscov x_t = \tr\left(\sscov\right)$. One could try to get final upper bounds in terms of $\tr(\sscov)$ instead of $\statedim \|\sscov\|$ by using the Hanson-Wright inequality. We do not take this approach because of the time varying matrices $\Astar^{t^\prime - t}$.

Recall that for any $\gamma \in (\rho(\Astar), 1)$ we can upper bound $\|\Astar^t\| \leq \tau(\Astar, \gamma)\gamma^t$ for all $t\geq 0$. Therefore, with probability $1 - \delta / 2$ we can upper bound
\begin{align*}
\EE[\zeta^2_t | x_{0:T}] &\leq 3\sigma_\epsilon^4 + 24 \|\sscov \|^2 \left(d^2 + 4\log\left(\frac{2T}{\delta}\right)^2\right),\\
\EE[\zeta_t \zeta_{t^\prime} | x_{0:T}] &\leq 8\|\sscov \|^2 \tau(\Astar, \gamma)^2 \gamma^{2(t^\prime - t)} \left(d^2 + 4\log\left(\frac{2T}{\delta}\right)^2\right).
\end{align*}
Now, we can bound the conditional covariance matrix $\Sigma_{e|x}$. We use identity \eqref{eq:error_cov_id} and the inequality $uv^\top + vu^\top \preceq uu^\top + vv^\top$ that holds for any vectors $u$ and $v$ of equal dimension. Then, with probability at least $1 - \delta / 2$ we have
\begin{align}
\Sigma_{e|x} \preceq \left[3\sigma_\epsilon^4 + 24\frac{\tau(\Astar, \gamma)^2}{1 - \gamma^2} \|\sscov\|^2 \left(d^2 + 4\log\left(\frac{2T}{\delta}\right)^2\right)\right] \left(\sum_{t = 0}^{T - 1} \phi_t \phi_t^\top \right)^{-1}.
\end{align}
Proposition~\ref{prop:guarantee_on_exploration} guarantees that for all $T \geq c \statedim^4 \log(c\statedim^2 / \delta)$ the following bound holds with probability at least $1 - \delta$:
\begin{align}
\label{eq:final_bound_cov}
\Sigma_{e|x} \preceq \frac{c}{T}\left[\sigma_\epsilon^4 + \frac{\tau(\Astar, \gamma)^2}{1 - \gamma^2} \|\sscov\|^2 \left(d^2 + 4\log\left(\frac{2T}{\delta}\right)^2\right)\right].
\end{align}
\subsection{Putting it all together}

Recall that for all $z > 0$ Chebyshev's inequality guarantees that $\PP\left(\sqrt{e^\top \Sigma_{e|x} e} > z\right) \leq \frac{\statedim^2}{z^2}$. 
Let $\Ecal_\sigma$ be the event on which \eqref{eq:final_bound_cov} holds and let
\begin{align}
b := \sqrt{\frac{c\statedim^2}{T\delta}\left[\sigma_\epsilon^4 + \frac{\tau(\Astar, \gamma)^2}{1 - \gamma^2} \|\sscov\|^2 \left(d^2 + 4\log\left(\frac{2T}{\delta}\right)^2\right)\right]}
\end{align}

Therefore, we have 
\begin{align*}
\PP\left(\|e\|_2 > b\right) &\leq \PP\left(\{\|e\|_2 > b\} \cap \Ecal_\sigma \right) + \PP(\Ecal_\sigma^c)\\
&\leq \PP\left(\left\{e^\top \Sigma_{e|x}^{-1}e > \frac{2 \statedim^2}{\delta}\right\} \cap \Ecal_\sigma \right) + \frac{\delta}{2}\\
&\leq \PP\left(e^\top \Sigma_{e|x}^{-1}e > \frac{2 \statedim^2}{\delta}\right) + \frac{\delta}{2} \leq \delta
\end{align*}
This step completes the proof of the first part of Theorem~\ref{thm:cov1}.

\section{Proof of bound \ref{eq:thm_part_II} of Theorem~\ref{thm:cov1}}
\label{sec:thm_partII}

In this section we prove the guarantee offered in Theorem~\ref{thm:cov1}
for the estimation of the product $A \sscov$ from the data points 
$(x_{t + 1} x_t^\top, y_{t + 1} y_{t})$. 

We use the vectorization: $\vec := \RR^{\outdim \times \outdim} \to \RR^{\outdim^2}$. We we follow an analysis similar to that \eqref{eq:thm_part_I}, relying on Lemma~\ref{lem:meta}. Therefore, we define $\phi_t = \vec(x_{t + 1} x_{t}^\top)$ and denote $\widehat{M}$ as the OLS estimate of $A \sscov$. Then,  
\begin{align}
\vec(\widehat{M}) - \vec(A \sscov) = \left(\sum_{t = 0}^{T - 1} \phi_t \phi_t^\top \right)^{-1} \left(\sum_{t = 0}^{T - 1} \phi_t \xi_t\right),
\end{align}
where $\xi_t$ is the noise term shown in \eqref{eq:noise_cov}.

Note that the covariance of $\vec(x_t x_{t + 1}^\top)$ is the identity matrix, whose minimum eigenvalue is one. Therefore, one could follow the same steps as in Section~\ref{sec:design_matrix} to show that $\lambda_{\min} \left(\sum_t \phi_t \phi_t^\top\right) = \Omega(T)$ when $\phi_t = \vec(x_t x_{t + 1}^\top)$. 

We are left to upper bound the conditional correlations of the noise terms $\xi_t$. We follow the same steps as in Section~\ref{sec:cov_ind_partI}; we first upper bound $\EE[\xi_t^2 | x_{0:T}].$
Conditional on $x_t$ and $x_{t + 1}$, all cross terms of $\xi_t^2$ are zero-mean. Hence, we have 

\begin{align*}
\EE[\xi_t^2 | x_{0:T}] &= \EE[ (x^\top_t Z_t  A^\top x_{t + 1})^2 | x] +\sigma_\epsilon^2 x_t^\top \sscov x_t + x_t^\top \sscov x_t x_{t + 1}^\top \Sigma_w x_{t + 1} + \sigma_\epsilon^2  x_{t + 1}^\top A \sscov A^\top x_{t + 1}\\
&  + \sigma_\epsilon^4 + \sigma_\epsilon^2  x_{t + 1}^\top \Sigma_w x_{t + 1}.
\end{align*}

To compute the first term in this expansion we rely on known formulas for the expectation of products of quadratic forms of Gaussian random vectors \cite{bao2010expectation,magnus1979expectation}. Recall that $Z_t = \beta_t\beta_t^\top - \sscov$ with $\beta \sim \Ncal(0, \sscov)$. Then, we obtain
\begin{align*}
\EE[ (x^\top_t Z_t  A^\top x_{t + 1})^2 | x] &=  x_{t + 1} A\sscov A^\top x_{t + 1} x_t^\top \sscov x_t + x_t^\top \sscov  A^\top x_{t + 1} x_{t + 1}^\top A \sscov x_t\\
&\leq  \left(x_{t + 1} A\sscov A^\top x_{t + 1}\right)^2 + \left( x_t^\top \sscov x_t \right)^2
\end{align*}

Therefore, after some simple inequalities we find 
\begin{align*}
\EE[\xi_t^2 | x_{0:T}] &\leq \frac{5}{2}\sigma_\epsilon^4 + 2 \left(x_t^\top \sscov x_t \right)^2 + \frac{3}{2} \left(x_{t + 1} A\sscov A^\top x_{t + 1}\right)^2 + (x_{t + 1}^\top \Sigma_w x_{t + 1})^2. 
\end{align*}
Since $\sscov$ satisfies the Lyaunov equation $\sscov = A \sscov A^\top + \Sigma_w$, we know that $\sscov \succeq \Sigma_w$ and $\sscov \succeq A\sscov A^\top$. Hence, we can further upper bound the conditional variance of the noise by 
\begin{align*}
\EE[\xi_t^2 | x_{0:T}] &\leq \frac{5}{2}\sigma_\epsilon^4 + 2 \left(x_t^\top \sscov x_t \right)^2 + \frac{5}{2} \left(x_{t + 1} \sscov x_{t + 1}\right)^2 \leq 3\sigma_\epsilon^4 + 5 \sup_{j} \left(x_j^\top \sscov x_j \right)^2. 
\end{align*}
Conveniently, we obtained an upper bound on $\EE[\xi_t^2 | x_{0:T}]$ that has the same form as the upper bound on $\EE [\zeta_t^2 | x_{0:T}]$ obtained in Section~\ref{sec:cov_ind_partI}. 

Now, we turn to upper bounding $\EE[\xi_t \xi_{t^\prime} | x_{0:T}]$ with $t^\prime > t$. There is a slight difference between the cases $t^\prime = t + 1$ and $t^\prime > t + 1$ separately. We have 
\begin{align}
\label{eq:cross_term2}
\EE[\xi_t \xi_{t + 1} | x_{0:T}] &= \EE \left[x_{t + 1}^\top A Z_t x_t x_{t + 2}^\top A Z_{t + 1} x_{t + 1} | x_{0:T}\right] + \sigma_\epsilon^2 \EE \left[x_t^\top \beta_t x_{t + 2}^\top A \beta_{t + 1}| x_{0:T}\right]\\
\EE[\xi_t \xi_{t^\prime} | x_{0:T}] &= \EE \left[x_{t + 1}^\top A Z_t x_t x_{t^\prime + 1}^\top A Z_{t^\prime} x_{t^\prime} | x_{0:T}\right] \label{eq:corss_term3} 
\end{align}
The term $\sigma_\epsilon^2 \EE \left[x_t^\top \beta_t x_{t + 2}^\top A \beta_{t + 1}| x_{0:T}\right]$ is equal to $\sigma_\epsilon^2 x_{t + 2}^\top A^2 \sscov x_t \leq \sigma_\epsilon^2 \|A^2 \sscov \| \sup_t \|x_t\|^2 \leq (\sigma_\epsilon^4 + \|A^2 \sscov \|^2 \sup_t \|x_t\|^4) / 2.$ 

We turn to bounding $\EE \left[x_{t + 1}^\top A Z_t x_t x_{t^\prime + 1}^\top A Z_{t^\prime} x_{t^\prime} | x_{0:T}\right]$ for $t^\prime > t$. Recalling that $Z_t = \beta_t \beta_t^\top - \sscov$ and the decomposition $\beta_{t^\prime} = \Astar^{t^\prime - t} \beta_t + u_{t, t^\prime}$ from Section~\ref{sec:cov_ind_partI}, we get
\begin{align}
\EE& \left[x_{t + 1}^\top A Z_t x_t x_{t^\prime + 1}^\top A Z_{t^\prime} x_{t^\prime} | x_{0:T}\right] 
= \EE \left[x_{t + 1}^\top A \beta_t\beta_t^\top x_t x_{t^\prime + 1}^\top \Astar^{t^\prime - t + 1} \beta_t \beta_t^\top (\Astar^\top)^{t^\prime - t} x_{t^\prime} | x_{0:T}\right] \\
&+ \EE \left[x_{t + 1}^\top A \beta_t \beta_t^\top x_t x_{t^\prime + 1}^\top A u_{t, t^\prime} u_{t, t^\prime}^\top x_{t^\prime} | x_{0:T}\right] - 
x_{t + 1}^\top A \sscov x_t x_{t^\prime + 1}^\top \Astar \sscov \ x_{t^\prime}
\end{align}
Since $\beta_t$ and $u_{t, t^\prime}$ are independent, the second term is equal to $x_{t + 1}^\top A \sscov x_t x_{t^\prime + 1}^\top A \sum_{j = 0}^{t^\prime - t - 1} A^j \Sigma_w (A^\top)^j x_{t^\prime}$. Now, since $\sscov = \sum_{j = 0}^\infty A^j \Sigma_w (A^\top)^j$, we get 
\begin{align}
\EE \left[x_{t + 1}^\top A Z_t x_t x_{t^\prime + 1}^\top A Z_{t^\prime} x_{t^\prime} | x_{0:T}\right] 
=& \EE \left[x_{t + 1}^\top A \beta_t\beta_t^\top x_t x_{t^\prime + 1}^\top \Astar^{t^\prime - t + 1} \beta_t \beta_t^\top (\Astar^\top)^{t^\prime - t} x_{t^\prime} | x_{0:T}\right] \\
&+ x_{t + 1}^\top A \sscov x_t x_{t^\prime + 1}^\top A \sum_{j = t^\prime - t}^{\infty} A^j \Sigma_w (A^\top)^j x_{t^\prime}.
\end{align}
The last term can be simplified by remembering that $\sum_{j = t^\prime - t}^{\infty} A^j \Sigma_w (A^\top)^j = A^{t^\prime - t} \sscov (A^\top)^{t^\prime - t}$. 

Now, to compute $D:= \EE \left[x_{t + 1}^\top A \beta_t\beta_t^\top x_t x_{t^\prime + 1}^\top \Astar^{t^\prime - t + 1} \beta_t \beta_t^\top (\Astar^\top)^{t^\prime - t} x_{t^\prime} | x_{0:T}\right]$ we first denote 
\begin{align*}
L_1 &= \frac{x_t x_{t + 1}^\top A  + (x_t x_{t + 1}^\top A )^\top}{2}\\
L_2 &= \frac{(\Astar^\top)^{t^\prime - t} x_{t^\prime} x_{t^\prime + 1}^\top \Astar^{t^\prime - t + 1} + ((\Astar^\top)^{t^\prime - t} x_{t^\prime} x_{t^\prime + 1}^\top \Astar^{t^\prime - t + 1})^\top}{2}.
\end{align*} 

With this notation we can write $D= \EE\left[\beta_t^\top L_1 \beta_t \beta_t^\top L_2 \beta_t | x_{0:T}\right]$. We introduced the matrices $L_1$ and $L_2$ because they are symmetric, which allows us to use the results of \citet{magnus1979expectation} to find 
\begin{align}
\EE\left[\beta_t^\top L_1 \beta_t \beta_t^\top L_2 \beta_t | x_{0:T}\right] = \tr(L_1 \sscov) \tr(L_2 \sscov) + 2\tr(L_1 \sscov L_2 \sscov). 
\end{align}
Now we apply the Cauchy-Schwartz inequality and the triangle inequality to find that 
\begin{align*}
|D| \leq 3 \|\sscov\|^2 \|L_1\|\|L_2\| &\leq 3 \|\sscov\|^2 \|A\| \tau(\Astar, \gamma)^2 \gamma^{2(t^\prime - t) + 1}   \sup_{t} \|x_t\|^4\\
&\leq 3 \|\sscov\|^2 \tau(\Astar, \gamma)^3 \gamma^{2(t^\prime - t + 1)}   \sup_{t} \|x_t\|^4.
\end{align*}

Hence, we can upper bound 
\begin{align*}
|\EE \left[x_{t + 1}^\top A Z_t x_t x_{t^\prime + 1}^\top A Z_{t^\prime} x_{t^\prime} | x_{0:T}\right]|  \leq 4 \|\sscov\|^2 \tau(\Astar, \gamma)^3 \gamma^{2(t^\prime - t + 1)}   \sup_{t} \|x_t\|^4.
\end{align*}
This step yielded the last upper bound we need on the covariance of the noise terms $\xi_t$. For future reference, let us list what we have proven:
\begin{align*}
\EE[\xi_t^2 | x_{0:T}] &\leq 3\sigma_\epsilon^4 + 5 \|\sscov\|^2 \sup_{j} \|x_j\|^4, \\
\EE[\xi_t \xi_{t + 1} | x_{0:T}] &\leq \frac{\sigma_\epsilon^4}{2} + \frac{1}{2}\|A^2\|^2 \|\sscov\|^2 \sup_j\|x_j\|^4  + 3 \|\sscov\|^2 \tau(\Astar, \gamma)^3 \gamma^{4}   \sup_{j} \|x_j\|^4 \\
&\leq \frac{\sigma_\epsilon^4}{2} + 4\|\sscov\|^2 \tau(\Astar, \gamma)^3 \gamma^{2}   \sup_{j} \|x_j\|^4,\\
\EE[\xi_t \xi_{t^\prime} | x_{0:T}] &\leq 4 \|\sscov\|^2 \tau(\Astar, \gamma)^3 \gamma^{2(t^\prime - t + 1)}   \sup_{t} \|x_t\|^4,
\end{align*}
where the last inequality holds for all $t^\prime > t + 1$. 

Let us put all these bounds together to upper bound $\sum_{i,j = 0}^{T - 1} \EE\left[\xi_i \xi_j|x_{0:T}\right] \phi_i \phi_j^\top$. As in Section~\ref{sec:cov_ind_partI}, we use the upper bound $uv^\top + vu^\top \preceq uu^\top + vv^\top$ and the upper bounds we derived on $\EE\left[\xi_i \xi_j|x_{0:T}\right]$. We find
\begin{align*}
\sum_{i,j = 0}^{T - 1} \EE\left[\xi_i \xi_j|x_{0:T}\right] \phi_i \phi_j^\top &\preceq \sum_{i = 0}^{T - 1} \left(\sum_{j = 0}^{T - 1}  |\EE\left[\xi_i \xi_j|x_{0:T}\right]| \right) \phi_i \phi_i^\top \\
&\preceq \left(4\sigma_\epsilon^4 + 8 \|\sscov\|^2 \frac{\tau(\Astar, \gamma)^3 }{1 - \gamma^2} \sup_{t} \|x_t\|^4\right)
\left(\sum_{i = 0}^{T - 1} \phi_i\phi_i^\top \right). 
\end{align*}
Therefore, we have shown that 
\begin{align*}
\Sigma_{e|x} \preceq \left(4\sigma_\epsilon^4 + 8 \|\sscov\|^2 \frac{\tau(\Astar, \gamma)^3 }{1 - \gamma^2} \sup_{t} \|x_t\|^4\right)
\left(\sum_{i = 0}^{T - 1} \phi_i\phi_i^\top \right)^{-1}.
\end{align*} 
As in Section~\ref{sec:cov_ind_partI}, we can upper bound $\sup_{j}\|x_j\|$ by $\sqrt{d} + \sqrt{2\log(2T/\delta)}$ with probability $1 - \delta$. This complete the proof. 

\section{Proof of Corollary~\ref{cor:cov}}

We recall that $\Ahat = \Mhat \sshat^{-1}$, where $\Mhat$ is the estimate of $\Astar \sscov$ produced by the OLS estimator \eqref{eq:cov_method}. Let $r$ denote the upper bound on the estimation error guaranteed in Theorem~\ref{thm:cov1}. Hence, $\|\sshat - \sscov\|_F \leq r$ and $\|\Mhat - \Astar \sscov \|_F \leq r$. 

Let us denote $\Delta_M =    \Astar \sscov - \Mhat$ and $\Delta_s = \sshat - \sscov $. 
Then, since our estimate of $A$ is $\Ahat =  \widehat{M}\sshat^{-1}$, we have
\begin{align*}
\Ahat - \Astar = \Mhat \sshat^{-1} -  \Astar \sscov \sscov^{-1}
\end{align*}
Therefore, 
\begin{align*}
 (\Ahat  - \Astar) \sshat &= \widehat{M} - (\widehat{M} + \Delta_M) \sscov^{-1} (\sscov + \Delta_s)= - \Delta_M + \Astar \Delta_s,
\end{align*}
which implies $\|\Ahat  - \Astar\|_F \leq r(1 + \|\Astar\|)\|\sshat^{-1}\|$ since $\|\Delta_M\|_F \leq r$ and $\|\Delta_s\|_F \leq r$. The conclusion follows once we require the number of samples $T$ to be large enough to guarantee that $r \leq \lambda_{\min}(\sscov)/2$, which ensures that $\lambda_{\min}(\sshat) \geq \lambda_{\min}(\sscov)/2$. 

\section{An extension of our analysis}
\label{app:features}

For simplicity, we have been assuming that $x_t \iid \Ncal(0, I_\statedim)$, $w_t \iid \Ncal(0, \Sigma_w)$, and $\epsilon_t \iid \Ncal(0, \sigma_\epsilon^2)$. However, our analysis can be easily extended to a more general setting. 

Firstly, the features $x_t$ do not need to be Gaussian nor do they have to be isotropic. The same analysis can be conducted as long as the features are sub-Gaussian. The covariance of the features can be assumed to be identity without loss of generality because when the features have some positive definite covariance $\Sigma_x$ we can whiten the features by $\Sigma_x^{-1/2} x_t$ and estimate the dynamics of the states $\widetilde{\beta}_t = \Sigma_x^{1/2} \beta_t$ instead. The transition matrix of $\widetilde{\beta}_t$ is given by $\Sigma_x^{1/2} \Astar \Sigma_x^{-1/2}$. 

The perceptive reader will note that our analysis only required that the noise terms $w_t$ and $\epsilon_t$ have zero means, are independent across time and of each other, and that they have finite fourth moments. Our analysis can be simply extended to this general setting.

\end{document}